\newtheorem{The}{Theorem}[section]
\newtheorem{Not}{Notation}[section]
\newtheorem{Def}{Definition}[section]
\newtheorem{Ex}{Example}[section]
\title{Analysis of Stability, Bifurcation, and Chaos in Generalized Mackey-Glass Equations}
\author{ Deepa Gupta and Sachin Bhalekar }
\date{\today}
\begin{document}
	\maketitle	
	\begin{abstract}
		Mackey-Glass equation arises in the leukemia model. We generalize this equation to include fractional- order derivatives in two directions. The first generalization contains one whereas the second contains two fractional derivatives.\\
		Such generalizations improve the model because the nonlocal operators viz. fractional derivatives are more suitable for the natural systems. We present the detailed stability and bifurcation analysis of the proposed models. We observe stable orbits, periodic oscillations, and chaos in these models. The parameter space is divided into a variety of regions, viz. stable region (delay independent), unstable region, single stable region, and stability/instability switch. Furthermore, we  propose a control method for chaos in these general equations.
	\end{abstract}
	\section{Introduction}
	\noindent Insertion of the delay improves the dynamics by modeling memory in the natural systems accurately \cite{smith2011introduction,gopalsamy2013stability,bocharov2000numerical,rihan2014delay}. In the delay differential equations (DDE), the rate of change of the state at the present time depends on the state values in the past. These systems are popular among mathematicians, too, because they are infinite dimensional systems, and their stability analysis is a challenging task \cite{hale2013introduction}.  In contrast with their ODE counterparts, the scalar DDEs may exhibit chaotic oscillations \cite{ucar2003chaotic,ikeda1980optical,yalccin2007n,sprott2007simple,pham2015multi}.
	
	On the other hand, including a nonlocal operator such as fractional order derivative makes the model more realistic \cite{kilbas2006theory}. Fractional differential equations are employed to model the phenomena in Physics \cite{torvik1984appearance}, Visco-elasticity \cite{mainardi2010fractional,mainardi2008time}, Biology \cite{magin2004fractional,arshad2019fractional}, Engineering \cite{saqib2019application} and Economics \cite{fallahgoul2016fractional,tarasov2019history}. Thus, the equations involving the delay and fractional derivative are crucial \cite{bhalekar2011fractional}. Stability analysis of fractional order delay differential equations is provided in \cite{bhalekar2013stability,vcermak2015stability,bhalekar2016stability,bhalekar2019analysis,bhalekar2023can,bhalekar2024stability}.

	In their celebrated paper \cite{mackey1977oscillation}, Mackey and Glass proposed a simple scalar nonlinear delay differential equation $\dot{x}(t)=\frac{q x(t-\tau)}{1+x(t-\tau)^n}-p x(t)$ modeling physiological system. This equation models the control of hematopoiesis in chronic granulocytic leukemia patients.  The equation shows various dynamical behavior, including periodic and chaotic oscillations. 
	Oscillatory solutions of such equations and the global attractivity are presented in \cite{gopalsamy1990oscillations,zaghrout1996oscillations,gyori1999global}.
	The electronic analog of the Mackey-Glass (MG) system is designed in \cite{namajunas1995electronic,amil2015organization}. Losson et al. \cite{losson1993solution} discussed the multistability in the MG and some other DDEs.
	
	Motivated by the above discussion, we propose the generalizations to the MG model to include the Caputo fractional derivatives.\\
	The paper is organized as follows: Section \ref{sec3} present some basic definitions, theorems and the literature survey related to classical Mackey-Glass equation. Section \ref{sec5.2} describes the stability, bifurcation, chaos and chaos control in the fractional-order Mackey-Glass equation. Analysis of the generalized Mackey-Glass equation containing two fractional derivatives is given in Section \ref{sec5.3}. Validation of results are described in Section \ref{sec5.7}. Section \ref{sec5.8} concludes the results given in the paper.  
	\section{Preliminaries}\label{sec3}
	In this section, we provide some basic definitions described in the literature \cite{podlubny1998fractional,lakshmanan2011dynamics,kilbas2006theory,bhalekar2013stability,diethelm2002analysis,bhalekar2019analysis}.
	\begin{Def}[Fractional Integral]
		For any $f \in \mathcal{L}_{1}(0,m)$ the Riemann-Liouville fractional integral of order $\upmu >0$, is given by 
		\begin{equation*}
			\textit{I}^\upmu f(t)=\dfrac{1}{\Gamma(\upmu)}\int_{0}^{t}(t-\tau)^{\upmu-1}f(\tau)d\tau  , \quad   0<t<m.
		\end{equation*}
		
	\end{Def}
	\begin{Def}[Caputo Fractional Derivative]
		For $f ^{(\eta)}\in \mathcal{L}_{1}(0,m)$, $0<t<m$ and $\eta-1<\upmu\leq \eta$, $\eta \in \mathbb{N}$, the Caputo fractional derivative of function $f$ of order $\upmu$ is defined by,
		\[\textit{D}^{\upmu} f(t)=
		\begin{cases}
			\frac{d^{\eta}}{dt^{\eta}} f(t) ,\textit{ if } \quad \upmu = \eta \\ \textit{I}^{\eta-\upmu}\dfrac{d^{\eta} f(t)}{dt^{\eta}}, \textit{ if } \quad  \eta-1< \upmu < \eta. 
		\end{cases}\]
		Note that $\textit{I}^\upmu\textit{D}^\upmu f(t)=f(t)-\sum_{k=0}^{\eta-1}\dfrac{d^k f(0)}{dt^k}\dfrac{t^k}{k!}.$\\
		The Caputo derivative of a constant is zero, like the classical case.
	\end{Def}
	\begin{Def}[Equilibrium Point]
		Let us take fractional order delay differential equation:
		\begin{equation}\label{ab}
			D^\alpha{x}(t)=f(x(t),x(t-\tau)),  \quad 0<\alpha\leq 1,
		\end{equation}
		and consider the two-term fractional order generalized delay differential equation:
		\begin{equation}
			D^\alpha{x}(t)+c D^{2\alpha}x(t)=f(x(t),x(t-\tau)),  \quad 0<\alpha\leq 1,\label{aa}
		\end{equation}
		where, $\tau \textgreater0$,  $f: E\rightarrow \mathbb{R}$, $E\subseteq \mathbb{R}^2$ is open and $f\in C^1(E)$.\\
		A steady state solution is called an equilibrium point.\\
		So, $x_*$ is an equilibrium point if and only if
		\begin{equation}
			f(x_*,x_*)=0.\label{gg}
		\end{equation}
	\end{Def}
	Consider a fractional order DDE \eqref{ab} and the two term FDDE (\ref{aa}) with the initial data 
	\begin{equation}
		x(t)=\phi(t), -\tau\leq t\leq 0,\textit{ where } \phi:[-\tau,0]\rightarrow \mathbb{R}. \label{cc}
	\end{equation}
	\begin{Not}
		The solution of delay differential equation (\ref{aa}) or \eqref{ab} with initial data (\ref{cc}) is denoted by $x(t,\phi).$\\
		The norm of $\phi$ is given by 
		\begin{equation*}
			||\phi||=\sup_{-\tau\leq t \leq 0}|\phi(t)|.
		\end{equation*}
	\end{Not}
	\begin{Def}
		An equilibrium point $x_*$ is stable if for any given $\epsilon \textgreater 0$, there exists $\delta \textgreater 0$ such that \[||\phi-x_*||\textless \delta \Rightarrow |x(t,\phi)-x_*|\textless \epsilon, \quad t\geq 0.\] 
	\end{Def}
	\begin{Def}
		An equilibrium point $x_*$ is asymptotically stable if it is stable and there exists $b_0\textgreater 0$ such that \[||\phi-x_*||\textless b_0 \Rightarrow \lim_{t\longrightarrow\infty}x(t,\phi)=x_*.\]
	\end{Def}
	\begin{Def}
		An equilibrium point that is not stable is called unstable.
	\end{Def} 
	\begin{Def}[Single stable region (SSR)]
		If there exists $\tau_{*}>0$ such that $0<\tau<\tau_{*}\Longrightarrow x_*$ is stable and if $\tau>\tau_{*}\Longrightarrow x_*$ is unstable, then we say that there is a single stable region in the parameter space.
	\end{Def}
	\begin{Def}[Stability Switch (SS)] 
		We get stability switches if $\exists$ $\tau_{0*}=0$ and positive constants $\tau_{1*}$, $\tau_{2*}$, $\tau_{3*},\ldots\tau_{k*}$ such that \\
		$\tau_{2jk*}<\tau<\tau_{(2j+1)*}$  $\Longrightarrow$ $x_*$ is stable, $j=0,1,\ldots,\frac{k-1}{2}$,\\
		$\tau_{(2j+1)*}<\tau<\tau_{(2j+2)*} \Longrightarrow$ $x_*$ is unstable, $j=0,1,\ldots,\frac{k-3}{2}$\\
		and 
		$\tau>\tau_k$ $\Longrightarrow$ $x_*$ is unstable.\\
	\end{Def}	
	\begin{Def}[Instability switches (IS)]
		In this case, we will have $\tau_{0*}=0$ and positive constants $\tau_{1*}$, $\tau_{2*},	\ldots,\tau_{k*}$ such that \\
		$\tau_{2j*}<\tau<\tau_{(2j+1)*}\Longrightarrow $ $x_*$ is unstable, $j=0,1,\ldots,(k-2)/2$,\\
		$\tau_{(2j+1)*}<\tau<\tau_{(2j+2)*}\Longrightarrow $ $x_*$ is stable, $j=0,1,\ldots,(k-2)/2$\\
		and $\tau>\tau_k\Longrightarrow$ $x_*$ is unstable. 
	\end{Def}
	\begin{The}\label{11}
		\cite{bhalekar2016stability,bhalekar2023can} Consider the fractional order delay differential equation
		\begin{equation}\label{eq5.1.1.1.}
			D^{\alpha} x(t)=\gamma x(t)+\beta x(t-\tau),\quad 0<\alpha\leq1.
		\end{equation}
		\begin{itemize}
			\item[{Case 1}] If  $\beta\in (-\infty,-|\gamma|)$ then the stability region of equation \eqref{eq5.1.1.1.} in $(\tau,\gamma,\beta)$ parameter space is located between the plane $\tau=0$ and
			\begin{equation*}\label{zzzzz}
				\tau_*=\dfrac{\arccos\Bigg(\dfrac{\Bigg(\gamma\cos\Big(\dfrac{\alpha\pi}{2}\Big)+\sqrt{\beta^2-\gamma^2\sin^2\Big(\dfrac{\alpha\pi}{2}}\Big)\Bigg)\cos\dfrac{\alpha\pi}{2}-\gamma}{\beta}\Bigg)}{\Bigg(\gamma\cos\Big(\dfrac{\alpha\pi}{2}\Big)\pm\sqrt{\beta^2-\gamma^2\sin^2\Big(\dfrac{\alpha\pi}{2}\Big)}\Bigg)^{1/\alpha}}.
			\end{equation*}
			The equation undergoes Hopf bifurcation at this value.
			\item[{Case 2}] If $\beta\in (-\gamma,\infty)$ then equation \eqref{eq5.1.1.1.} is unstable for any $\tau\geq 0.$
			\item[{Case 3}] If $\beta\in (\gamma,-\gamma)$ and $\gamma<0$ then equation \eqref{eq5.1.1.1.} is stable for any $\tau\geq 0.$
		\end{itemize}
	\end{The}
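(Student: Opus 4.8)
The strategy is the classical one for linear delay equations, adapted to the fractional setting: pass to the characteristic equation and track the motion of its roots as the delay $\tau$ grows. Seeking solutions $x(t)=e^{\lambda t}$ of \eqref{eq5.1.1.1.} (equivalently, Laplace-transforming) leads to
\[
\Delta(\lambda,\tau):=\lambda^{\alpha}-\gamma-\beta e^{-\lambda\tau}=0,
\]
with $\lambda^{\alpha}$ on the principal branch, and, as in \cite{bhalekar2013stability,bhalekar2016stability}, the equilibrium is asymptotically stable iff every root of $\Delta(\cdot,\tau)$ has negative real part. I would first record two structural facts used throughout: $\lambda=0$ is never a root because $\Delta(0,\tau)=-(\gamma+\beta)$, which is nonzero in every one of the three cases; and any root with $\operatorname{Re}\lambda\ge0$ must satisfy $|\lambda^{\alpha}|=|\gamma+\beta e^{-\lambda\tau}|\le|\gamma|+|\beta|$, so such roots stay in a bounded set. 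Consequently the roots depend continuously on $\tau$ and the stability type can change only when a root crosses the imaginary axis.

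Next I would dispose of the endpoint $\tau=0$, where $\Delta$ reduces to $\lambda^{\alpha}=\gamma+\beta$; by Matignon's criterion this fractional ODE is asymptotically stable iff $\gamma+\beta<0$ and unstable iff $\gamma+\beta>0$. In Cases 1 and 3 one has $\beta<-|\gamma|\le|\gamma|$ hence $\gamma+\beta<0$ (stable at $\tau=0$), while in Case 2 one has $\beta>-\gamma$ hence $\gamma+\beta>0$. Case 2 is then settled immediately without any further root-tracking: since $\Delta(0,\tau)=-(\gamma+\beta)<0$ while $\Delta(\lambda,\tau)\to+\infty$ as $\lambda\to+\infty$ along the positive real axis, the intermediate value theorem gives a positive real root for \emph{every} $\tau\ge0$, so the equation is unstable for all $\tau\ge0$.

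For the remaining cases I look for purely imaginary roots $\lambda=i\omega$ with $\omega>0$ (the sign is WLOG by conjugation). Using $(i\omega)^{\alpha}=\omega^{\alpha}\bigl(\cos\tfrac{\alpha\pi}{2}+i\sin\tfrac{\alpha\pi}{2}\bigr)$ and separating real and imaginary parts gives
\[
\beta\cos(\omega\tau)=\omega^{\alpha}\cos\tfrac{\alpha\pi}{2}-\gamma,\qquad \beta\sin(\omega\tau)=-\omega^{\alpha}\sin\tfrac{\alpha\pi}{2}.
\]
Squaring and adding eliminates $\tau$ and produces the quadratic $\omega^{2\alpha}-2\gamma\cos\tfrac{\alpha\pi}{2}\,\omega^{\alpha}+(\gamma^{2}-\beta^{2})=0$, whose roots are $\omega^{\alpha}=\gamma\cos\tfrac{\alpha\pi}{2}\pm\sqrt{\beta^{2}-\gamma^{2}\sin^{2}\tfrac{\alpha\pi}{2}}$. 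A short sign analysis (using $\cos\tfrac{\alpha\pi}{2}\ge0$ for $0<\alpha\le1$) shows that when $\beta^{2}<\gamma^{2}$, as in Case 3, neither branch is a positive real number, so $\Delta(\cdot,\tau)$ has no imaginary root for any $\tau$; combined with stability at $\tau=0$, this proves Case 3 (delay-independent stability). When $\beta<-|\gamma|$, as in Case 1, $\beta^{2}>\gamma^{2}$ forces exactly the $+$ branch to be positive, and solving the first relation for $\omega\tau$ with $\omega=(\omega^{\alpha})^{1/\alpha}$ and $\omega^{\alpha}=\gamma\cos\tfrac{\alpha\pi}{2}+\sqrt{\beta^{2}-\gamma^{2}\sin^{2}\tfrac{\alpha\pi}{2}}$ yields precisely the stated expression for $\tau_{*}$; checking that $\sin(\omega\tau)=-\omega^{\alpha}\sin\tfrac{\alpha\pi}{2}/\beta>0$ confirms $\omega\tau_{*}\in(0,\pi)$ is the smallest positive solution, so no imaginary crossing occurs on $(0,\tau_{*})$.

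It remains to show the crossing at $\tau_{*}$ is transversal and directed into the right half-plane, which also yields the Hopf bifurcation. Implicit differentiation of $\Delta(\lambda,\tau)=0$ gives
\[
\frac{d\lambda}{d\tau}=\frac{-\beta\lambda e^{-\lambda\tau}}{\alpha\lambda^{\alpha-1}+\beta\tau e^{-\lambda\tau}},
\]
and it is cleaner to examine $\operatorname{Re}\bigl((d\lambda/d\tau)^{-1}\bigr)$, which has the same sign; substituting $\beta e^{-\lambda\tau}=\lambda^{\alpha}-\gamma$ from the characteristic equation and evaluating at $\lambda=i\omega$, the $\tau/\lambda$ term becomes purely imaginary and one is left with $\frac{\alpha}{\omega^{2}}\operatorname{Re}\bigl(\tfrac{(i\omega)^{\alpha}}{(i\omega)^{\alpha}-\gamma}\bigr)$, whose numerator reduces to $\omega^{2\alpha}-\gamma\cos\tfrac{\alpha\pi}{2}\,\omega^{\alpha}$; using the quadratic together with $\beta^{2}>\gamma^{2}$ this is strictly positive, so $\operatorname{Re}(d\lambda/d\tau)|_{\tau_{*}}>0$. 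Hence a conjugate pair of roots moves from the left into the right half-plane as $\tau$ passes through $\tau_{*}$, which together with stability at $\tau=0$ and the absence of earlier crossings yields asymptotic stability on $(0,\tau_{*})$ and a Hopf bifurcation at $\tau_{*}$. The main obstacle is this final block: carrying the branch-point factor $\lambda^{\alpha-1}$ through the transversality computation without sign errors, and — more conceptually — making the continuity-and-boundedness argument for the roots of the fractional characteristic function rigorous, since the branch cut of $\lambda^{\alpha}$ means the familiar integer-order reasoning ("first imaginary crossing equals loss of stability") needs a careful justification here.
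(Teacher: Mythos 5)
The paper does not prove this theorem: it is quoted verbatim from the cited references \cite{bhalekar2016stability,bhalekar2023can} and used as a black box, so there is no in-paper proof to compare against. Your argument is the standard characteristic-equation/root-crossing analysis that those references employ, and it is essentially correct: the Matignon check at $\tau=0$, the intermediate-value argument producing a positive real root in Case 2, the elimination of $\tau$ giving $\omega^{\alpha}=\gamma\cos\frac{\alpha\pi}{2}\pm\sqrt{\beta^{2}-\gamma^{2}\sin^{2}\frac{\alpha\pi}{2}}$ (no admissible $\omega$ when $\beta^{2}<\gamma^{2}$, exactly the $+$ branch when $\beta<-|\gamma|$), and the transversality computation all check out, and you correctly flag the only genuinely delicate points (that ``all roots in the open left half-plane'' implies asymptotic stability for the fractional equation, and that roots can enter the right half-plane only through the imaginary axis), which are precisely the technical facts the cited works establish.
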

	Theorem \eqref{11} plays important role in finding the stability of fractional order Mackey-Glass system.
	
	\begin{The}\label{mainthm5.1}
		\cite{bhalekar2024stability}
		Consider a two-term FDDE:
		\begin{equation}\label{eq5.1.2}
			D^\alpha x(t)+c D^{2\alpha} x(t)=a x(t)+b x(t-\tau),\quad 0<\alpha\leq1.
		\end{equation}
		The bifurcations in \eqref{eq5.1.2} are given by the curves $\Gamma_j $'s and the critical values of $c$ say  ($c_j$'s) are sketched in Figure \eqref{figure5.1.1}. We have the following stability properties:
		\begin{itemize}
			\item[(1)] If $b<0$, $c<0$, $a_1=a+b>0$ and $0<\alpha<1/2$, then we get a bifurcation value of $c$ i.e. $c_0$ (cf. Figure \eqref{figure5.1.1}$(a)$).
			\begin{itemize} 
				\item[\ding{118}]  for any $c\in(-\infty,c_0)$, we get four bifurcation values for the parameter $a_1$ at the curves $\Gamma_{13}$, $\Gamma_2$, $\Gamma_{12}$ and $\Gamma_6$ as $a_{11}$, $a_{12}$, $a_{13}$ and $a_{14}$, respectively such that-
				\begin{itemize}
					\item[$\bullet$] $a_1<a_{11}\Longrightarrow$ the system \eqref{eq5.1.2} is unstable $\forall\tau\geq0$
					\item[$\bullet$] $a_{11}<a_1<a_{12}\Longrightarrow$ IS
					\item[$\bullet$] $a_{12}<a_1<a_{13}\Longrightarrow$  SSR
					\item[$\bullet$] $a_{13}<a_1<a_{14}\Longrightarrow$ SS and
					\item[$\bullet$]	$a_1>a_{14}\Longrightarrow$ the system \eqref{eq5.1.2} is asymptotically stable $\forall\tau\geq0$.
				\end{itemize} 
				\item[\ding{118}]Similarly, for $c\in(c_0,0)$, we get the bifurcation values $a_{15}$, $a_{16}$ and $a_{17}$ for the parameter $a_1$ on the curves $\Gamma_{13}$, $\Gamma_2$ and $\Gamma_6$ respectively bifurcating the following regions:
				\begin{itemize}
					\item[$\bullet$] $a_1<a_{15}\Longrightarrow$ unstable region
					\item[$\bullet$] $a_{15}<a_1<a_{16}\Longrightarrow$ IS
					\item[$\bullet$] $a_{16}<a_1<a_{17}\Longrightarrow$ SS and
					\item[$\bullet$] $a_1>a_{17}\Longrightarrow$ stable region. 
				\end{itemize}
			\end{itemize}
			\item[(2)] Consider $b>0$, $c<0$, $a_1>0$ and $0<\alpha<1/2$, then 
			there exists bifurcation curves $\Gamma_2$, $\Gamma_{11}$ and $\Gamma_5$ (cf. Figure \eqref{figure5.1.1}$(b)$).\\
			For a fix $c$, Let the values of $a_1$ are $a_{18}$, $a_{19}$, $a_{20}$ along the bifurcations curves $\Gamma_2$, $\Gamma_{11}$ and $\Gamma_{5}$ respectively such that- 
			\begin{itemize}
				\item[$\bullet$] $a_1<a_{18}\Longrightarrow$ unstable $\forall\tau\geq0$
				\item [$\bullet$] $a_{18}<a_1<a_{19}\Longrightarrow$ we get SSR
				\item [$\bullet$] $a_{19}<a_1<a_{20}\Longrightarrow$ we get SS
				\item [$\bullet$] $a_{20}>a_1\Longrightarrow$ we get stable solution $\forall\tau\geq0$
			\end{itemize}
			\item[(3)] Now, $b>0$, $c>0$, $a_1<0$ and $1/2<\alpha<1$ then we will have two critical values of $c$ say $c_1$ and $c_2$ (cf. Figure \eqref{figure5.1.1}$(c)$) such that-
			\begin{itemize}
				\item[\ding{118}] if $c_1<c<\infty$ then we get two bifurcation values of $a_1$ on curves $\Gamma_7$ and $\Gamma_{14}$ say $a_{21}$ and $a_{22}$, respectively.
				\begin{itemize}
					\item[$\bullet$]$a_1<a_{21}\Longrightarrow$ stable $\forall\tau\geq0$
					\item[$\bullet$]$a_{21}<a_1<a_{22}\Longrightarrow$ we will obtained SS 	\item[$\bullet$]$a_{22}<a_1<0\Longrightarrow$ we will have SSR. 
				\end{itemize}
				\item[\ding{118}] if $c_2<c<c_1$ then we get only one bifurcation  value of $a_1$ i.e. $a_{23}$ on the curve $\Gamma_7$. 
				\begin{itemize}
					\item[$\bullet$] $a_1<a_{23}\Longrightarrow$ we will get stable solution $\forall\tau\geq0$.
					\item[$\bullet$] $a_{23}<a_1<0\Longrightarrow$ we have SS.
				\end{itemize}
				Note that no SSR exist in this case due to absent of the curve $\Gamma_{14}$.
				\item[\ding{118}] if $0<c<c_2$, then no bifurcation value of $a_1$ exist. So, we always have stable solution $\forall$ $\tau\geq0$. \\
				Due to not presence of the curve $\Gamma_7$ and $\Gamma_{14}$, no SSR and SS region exist in the above case.\\ 
			\end{itemize}		
			\item[(4)] When $b<0$, $1/2<\alpha<1$, $c>0$ and $a_1<0$, then we have bifurcation values of $c$ say $c_7$ and $c_5$ see. Figure \eqref{figure5.1.1}$(d)$ such that-
			\begin{itemize}
				\item[\ding{118}] if $c_7<c<\infty$, we get two bifurcation values of $a_1$ say $a_{24}$ and $a_{25}$ on the curves $\Gamma_9$ and $\Gamma_{16}$ respectively, such that
				\begin{itemize}
					\item [$\bullet$] if $a_1<a_{24}\Longrightarrow$, then we get stable solution $\forall\tau\geq0$.
					\item [$\bullet$] if $a_{24}<a_1<a_{25}\Longrightarrow$ SS region.
					\item [$\bullet$] when $a_{25}<a_1<0\Longrightarrow$ SSR region. 
				\end{itemize}
				\item[\ding{118}] when $c_5<c<c_7$ then we get three bifurcation values of $a_1$ i.e. $a_{26}$, $a_{27}$ and $a_{28}$ on the curves $a_1=2b$, $\Gamma_9$ and $\Gamma_{16}$, respectively. So, \\
				\begin{itemize}
					\item[$\bullet$] $a_1<a_{26}\Longrightarrow$ stable solution
					\item[$\bullet$]  $a_{26}<a_1<a_{27}\Longrightarrow$ SSR
					\item[$\bullet$]  $a_{27}<a_1<a_{28}\Longrightarrow$ SS
					\item[$\bullet$]  $a_{28}<a_1<0\Longrightarrow$ again SSR.
				\end{itemize}
				\item[\ding{118}] when $0<c<c_5$, we have only one bifurcation value of $a_1$ i.e. $a_1=2b$.
				\begin{itemize}
					\item[$\bullet$]	If $a_1<2b$ then we get stable solution $\forall\tau\geq0$. 
					\item[$\bullet$] If $2b<a_1<0$ then we get SSR. 
				\end{itemize} 	
			\end{itemize}
			\item[(5)] Whenever, $a_1$ and $c$ are of same sign i.e. either both are positive or both are negative then we always get unstable solution $\forall\tau\geq0$ for any $b\in\mathbb{R}$ and any $0<\alpha<1$. 
		\end{itemize}
	\end{The}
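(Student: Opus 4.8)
The proof runs through the characteristic equation, in the same spirit as the analysis behind Theorem~\ref{11}. Laplace--transforming the linear equation \eqref{eq5.1.2} and dropping the terms produced by the history $\phi$ gives the characteristic function
\begin{equation*}
	\Delta(\lambda):=c\,\lambda^{2\alpha}+\lambda^{\alpha}-a-b\,e^{-\lambda\tau},
\end{equation*}
and $x_{*}$ is asymptotically stable exactly when $\Delta$ has no zero $\lambda$ with $\mathrm{Re}\,\lambda\ge0$ (the principal branch of $\lambda^{\alpha}$ being used). I would first dispose of the delay-independent items. Putting $\tau=0$ and $\mu=\lambda^{\alpha}$ turns $\Delta=0$ into the quadratic $c\mu^{2}+\mu-a_{1}=0$, with roots $\mu_{1,2}=\bigl(-1\pm\sqrt{1+4ca_{1}}\,\bigr)/(2c)$; by the Matignon-type criterion for linear fractional systems, the system is stable at $\tau=0$ iff $|\arg\mu_{j}|>\alpha\pi/2$ for $j=1,2$, which a short sign discussion converts into explicit conditions on $(a_{1},c,\alpha)$. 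In particular, when $a_{1}$ and $c$ have the same sign one has $\Delta(0)=-a_{1}$ while the dominant term of $\Delta$ as $\lambda\to+\infty$ is $c\lambda^{2\alpha}$, of the opposite sign, so the real-variable function $\lambda\mapsto\Delta(\lambda)$ has a positive real zero for every $\tau\ge0$; this settles item~(5).

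For $\tau>0$, stability can be lost only by a zero of $\Delta$ crossing the imaginary axis. Substituting $\lambda=i\omega$ with $\omega>0$, so that $\lambda^{\alpha}=\omega^{\alpha}e^{i\alpha\pi/2}$ and $\lambda^{2\alpha}=\omega^{2\alpha}e^{i\alpha\pi}$, and separating real and imaginary parts yields two real equations in $(\omega,\tau)$; squaring and adding eliminates $\tau$ and leaves
\begin{equation*}
	b^{2}=\bigl(c\,\omega^{2\alpha}\cos(\alpha\pi)+\omega^{\alpha}\cos(\alpha\pi/2)-a\bigr)^{2}+\bigl(c\,\omega^{2\alpha}\sin(\alpha\pi)+\omega^{\alpha}\sin(\alpha\pi/2)\bigr)^{2}=:Q(\omega),
\end{equation*}
whose positive roots are the candidate crossing frequencies, and back-substitution produces the associated critical delays $\tau_{*}$; the endpoints $\omega\to0^{+}$ (where $Q(0)=a^{2}$) and $\omega\to\infty$ (where $Q(\omega)\sim c^{2}\omega^{4\alpha}$) fix the global shape of the diagram. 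The curves $\Gamma_{j}$ of Figure~\ref{figure5.1.1} are exactly the loci in the $(a_{1},c)$-plane on which $Q(\omega)=b^{2}$ gains, loses, or changes the multiplicity of a positive root, and the distinguished abscissae $c_{0},c_{1},c_{2},c_{5},c_{7}$ are the $c$-values at which two of these curves meet, so that the count of crossing frequencies jumps; the sign change of $\cos(\alpha\pi)$ at $\alpha=1/2$ is what forces the split into the regimes $0<\alpha<1/2$ and $1/2<\alpha<1$. Finally, at each crossing I would determine the sign of $\mathrm{Re}\,(d\lambda/d\tau)$ by implicitly differentiating $\Delta(\lambda)=0$, and match the ordered sequence of stabilising and destabilising crossings in $\tau$ against the $\tau=0$ status found above; this produces the labels ``unstable $\forall\tau\ge0$'', IS, SSR, SS and ``stable $\forall\tau\ge0$'' in items~(1)--(4).

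The main obstacle is the global bookkeeping rather than any isolated estimate: one must show that $Q(\omega)=b^{2}$ has exactly the asserted number of positive roots in each parameter cell cut out by the $\Gamma_{j}$'s and the $c_{j}$'s, arrange the corresponding $\tau_{*}$ in the correct order, and check the crossing direction at every one of them --- the degenerate abscissae $c_{j}$, where a crossing frequency is being created or annihilated, being the most delicate points. A subordinate technical step is to justify that stability is lost only through the imaginary axis: one uses continuity of the roots in $\tau$ together with an a priori bound placing all zeros of $\Delta$ of large modulus in the open unstable sector, which excludes roots appearing ``from infinity'' and legitimises reading off global stability from the boundary crossings. This is precisely the programme executed in \cite{bhalekar2024stability}; and since \eqref{eq5.1.2} is the linearisation that controls Section~\ref{sec5.3}, the same machinery will be reused there.
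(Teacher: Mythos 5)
The paper does not prove this theorem at all: it is imported verbatim from \cite{bhalekar2024stability}, and the text explicitly defers the curves $\Gamma_j$, the constants $c_j$ and the critical delays to that reference. So there is no internal proof to compare against; what can be said is that your outline is the standard characteristic--equation programme that the cited work follows, and the pieces you do spell out are correct. The characteristic function $\Delta(\lambda)=c\lambda^{2\alpha}+\lambda^{\alpha}-a-b e^{-\lambda\tau}$ is the right one, the reduction at $\tau=0$ to $c\mu^{2}+\mu-a_{1}=0$ with Matignon's sector condition is the right delay-free test, the elimination $Q(\omega)=b^{2}$ with $Q(0)=a^{2}$ and $Q(\omega)\sim c^{2}\omega^{4\alpha}$ is computed correctly, and your intermediate-value argument for item (5) ($\Delta(0)=-a_{1}$ versus the sign of $c$ at $+\infty$ forces a positive real root) is a complete and correct proof of that item --- the only part of the theorem your proposal actually establishes.

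For items (1)--(4), however, the proposal is a roadmap rather than a proof: every assertion that distinguishes those items --- which curves $\Gamma_j$ are present in which sign regime, how many positive roots $Q(\omega)=b^{2}$ has in each cell, the order of the resulting critical delays, and the crossing direction at each --- lives exactly in the ``global bookkeeping'' you set aside, so nothing beyond item (5) is verified. Two smaller points: the claim that stability can only be lost through the imaginary axis should be phrased as a uniform bound excluding zeros of large modulus from the \emph{closed right half-plane} (your ``all zeros of large modulus lie in the open unstable sector'' says the opposite of what is needed; for $\mathrm{Re}\,\lambda\ge0$ one has $|e^{-\lambda\tau}|\le1$, so $c\lambda^{2\alpha}$ dominates and $\Delta\neq0$ there for $|\lambda|$ large); and one should also record that absence of zeros in the closed right half-plane implies asymptotic stability for this fractional-order delay equation, which requires a decay estimate on the inverse Laplace transform and is not automatic. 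As a description of the method of \cite{bhalekar2024stability} your text is accurate; as a self-contained proof of the theorem it is incomplete except for item (5).
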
 
	Note that all the values of $c_0$, $c_1$, $c_2$, $c_7$, $c_5$ and the bifurcation curves with the critical values of delay where we have change in stability are given in \cite{bhalekar2024stability}.\\~\\
	The Theorem \eqref{mainthm5.1} plays a very crucial role in deciding the stability of generalized Mackey-Glass equation.          
	
	\begin{figure}
		\subfloat[when $b<0$, $c<0$, $a_1>0$ and $0<\alpha<1/2$]{%
			\includegraphics[scale=0.8]{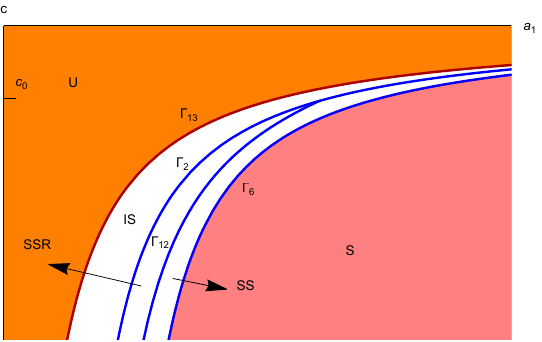}
		}\hspace{0.1cm}
		\subfloat[when $b>0$, $c<0$, $a_1>0$ and $0<\alpha<1/2$]{%
			\includegraphics[scale=0.8]{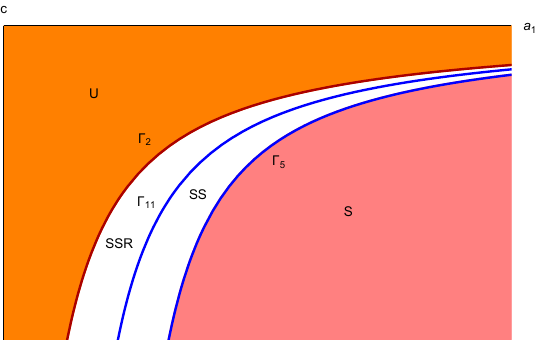}
		}\hspace{0.1 cm}
		\subfloat[when $b>0$, $c>0$, $a_1<0$ and $1/2<\alpha<1$]{%
			\includegraphics[scale=0.8]{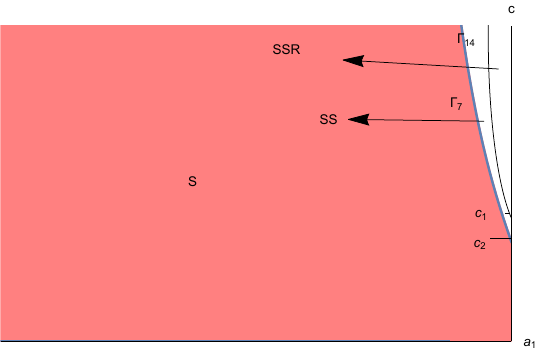}
		}\hspace{0.1cm}
		\subfloat[when $b<0$, $c>0$, $a_1<0$ and $1/2<\alpha<1$]{%
			\includegraphics[scale=0.8]{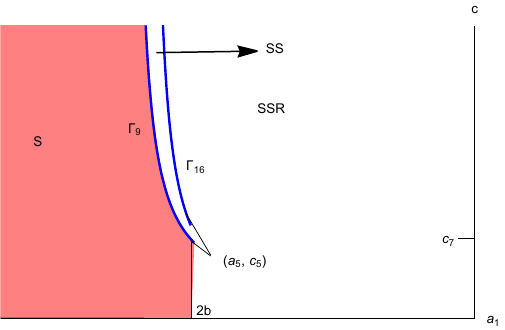}
		}
		\caption{Bifurcation diagram for different values of parameters given in equation \eqref{eq5.1.2}}
		\label{figure5.1.1}
	\end{figure}
	\subsection{Linearization near the equilibrium point $x_*$}
	\cite{bhalekar2019analysis}Assume that $x_*$ is an equilibrium point of FDDE \eqref{aa} and that $x(t)$ is a perturbed solution. Consider $\xi(t)=x(t)-x_*$. Using first order Taylor's approximation, we get 
	\begin{align*}
		D^{\alpha}\xi(t)+c D^{2\alpha}\xi(t) &= D^\alpha x(t)+cD^{2\alpha}x(t) \\&
		= f(x(t),x(t-\tau))\\&
		= f(x_*+\xi(t),x_*+\xi(t-\tau))\\&
		= f(x_*,x_*)+\partial_1 f(x_*,x_*)\xi(t)+\partial_2 f(x_*,x_*) \xi (t-\tau),
	\end{align*}
	where $\partial_1f(x_*,x_*)$ and $\partial_2f(x_*,x_*)$ are the partial derivative of $f$, with respect to first and second variable respectively.
	\begin{equation}
		\therefore D^\alpha\xi(t) +c D^{2\alpha}\xi(t)  = a\xi+ b\xi(t-\tau),\label{yy}
	\end{equation}
	where, $a=\partial_1f(x_*,x_*)$ and $b=\partial_2f(x_*,x_*)$. 
	Note that the qualitative behavior of equation \eqref{aa} is same as the qualitative behavior of linearized equation \eqref{yy} in a neighborhood of the equilibrium point $x_*$.
	
	\subsection{Mackey-Glass equation}
	The Mackey-Glass system \cite{mackey1977oscillation} is a model for the blood production in patients with leukemia. It is given by the nonlinear DDE
	\begin{equation}\label{eq5.1.1.1}
		\dot{x}(t)=-p x(t)+q \frac{x(t-\tau)}{1+x(t-\tau)^r},
	\end{equation}
	where $p$, $q$ and $r$ are the positive constants and  $x(t)$ represents the concentration of blood at time $t$. The request for blood concentration when more blood is needed is given by $x(t-\tau)$. The time for production of immature cells in the bone marrow and their maturation is the delay $\tau$.\\
	\subsubsection{Stability results}
	If $x_*$ is an equilibrium point then it must satisfy \[-p x_*+ q \frac{x_*}{1+x_*^r}=0.\]\\ 
	So, by solving it we get the equilibrium points  $x_1^*=0$ and $x_2^*=(\frac{q-p}{p})^{1/r}.$
	Note that for the existence of real equilibrium point we need $q>p$.\\
	$x_2^*$ gives a set of equilibrium  points for $r\neq1$, with the same stability properties.
	\begin{The}\cite{lakshmanan2011dynamics}
		Consider the Mackey-Glass equation \eqref{eq5.1.1.1}.
		\begin{itemize}
			\item The equilibrium point $x_1^*=0$ is unstable $\forall\tau\geq0$.
			\item  If $\frac{p}{q}\geq 1-\frac{2}{r}$ then, $x_2^*$ is asymptotically stable $\forall\tau\geq0$.
			\item  if $\frac{p}{q}<1-\frac{2}{r}$ then we get  \[\tau_*=\frac{1}{p\sqrt{\frac{(p-q)^2r^2}{q^2}+2\frac{(p-q)r}{q}}}\arccos\Big(\frac{q}{(p-q)r+q}\Big)\] such that for $0\leq\tau<\tau_*$, $x_2^*$ is asymptotically stable and for $\tau>\tau_*$ $x_2^*$ is unstable i.e. a SSR.
		\end{itemize}
	\end{The}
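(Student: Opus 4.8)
The plan is to linearize \eqref{eq5.1.1.1} at each equilibrium and read off the conclusion from Theorem~\ref{11} with $\alpha=1$. Writing $f(x,y)=-px+qy/(1+y^{r})$, one has $\partial_{1}f\equiv-p$ and $\partial_{2}f(x,y)=q\,\dfrac{1+(1-r)y^{r}}{(1+y^{r})^{2}}$, so by the linearization recipe above the perturbation $\xi=x-x_{*}$ satisfies $\dot\xi=a\,\xi(t)+b\,\xi(t-\tau)$ with $a=-p$ and $b=\partial_{2}f(x_{*},x_{*})$; hence the local stability of $x_{*}$ is exactly that of this scalar linear DDE, i.e.\ governed by Theorem~\ref{11} with $\gamma=a=-p<0$ and $\beta=b$.

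For $x_{1}^{*}=0$ we get $b=q$. Since a real positive equilibrium exists only when $q>p$, we have $\beta=q>p=-\gamma$, so $\beta\in(-\gamma,\infty)$ and Case~2 of Theorem~\ref{11} gives that $x_{1}^{*}$ is unstable for every $\tau\ge0$. For $x_{2}^{*}$, substitute $y^{r}=(q-p)/p$, so $1+y^{r}=q/p$; a short simplification yields $b=\dfrac{p\bigl((p-q)r+q\bigr)}{q}$. Two elementary inequalities then decide everything: (i) $b<p$ always, because $b<p$ is equivalent to $(p-q)r<0$, which holds since $q>p$ and $r>0$; and (ii) $b\ge-p\iff \frac{p}{q}\ge1-\frac{2}{r}$, obtained by clearing denominators in $\frac{p((p-q)r+q)}{q}\ge-p$.

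Consequently: if $\frac{p}{q}\ge1-\frac{2}{r}$ then $-p\le b<p$, i.e.\ $\beta\in(\gamma,-\gamma)$ with $\gamma<0$, and Case~3 of Theorem~\ref{11} gives asymptotic stability for all $\tau\ge0$; the boundary value $b=-p$ is not covered by Theorem~\ref{11} and must be treated by hand, but on the characteristic equation $\lambda+p=-p\,e^{-\lambda\tau}$ a purely imaginary root $\lambda=i\omega$ forces $\cos\omega\tau=-1$, hence $\omega=0$, which then forces $\lambda=-2p\neq0$, so no root reaches the imaginary axis and, since $\tau=0$ is stable, stability persists. If instead $\frac{p}{q}<1-\frac{2}{r}$ then $b<-p$, i.e.\ $\beta\in(-\infty,-|\gamma|)$, and Case~1 applies: $x_{2}^{*}$ is asymptotically stable for $0\le\tau<\tau_{*}$, loses stability through a Hopf bifurcation at $\tau_{*}$, and is unstable for $\tau>\tau_{*}$ — precisely an SSR.

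Finally I would recover the closed form of $\tau_{*}$. Specializing the Case~1 formula to $\alpha=1$ (so $\cos(\alpha\pi/2)=0$, $\sin(\alpha\pi/2)=1$) collapses it to $\tau_{*}=\arccos(-\gamma/\beta)/\sqrt{\beta^{2}-\gamma^{2}}=\arccos(p/b)/\sqrt{b^{2}-p^{2}}$. Using $p/b=q/\bigl((p-q)r+q\bigr)$ and, from $b/p=1+(p-q)r/q$, the identity $b^{2}-p^{2}=p^{2}\bigl(\frac{(p-q)^{2}r^{2}}{q^{2}}+\frac{2(p-q)r}{q}\bigr)$, this becomes exactly the stated expression (note $p/b\in(-1,0)$ and $b^{2}-p^{2}>0$ in this regime, so both functions are well defined). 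The only non-mechanical points in the argument are this algebraic reduction and the boundary case $b=-p$; everything else is a direct appeal to Theorem~\ref{11}. As an alternative one could bypass Theorem~\ref{11} entirely and run the classical Hayes/Cooke--Grossman analysis of $\lambda+p-b\,e^{-\lambda\tau}=0$, locating where a root crosses $i\mathbb{R}$ as $\tau$ increases.
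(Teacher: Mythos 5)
Your proof is correct: the linearization, the values $\gamma=-p$ and $\beta=\frac{p((p-q)r+q)}{q}$, the sign analysis reducing everything to the three cases of Theorem~\ref{11}, and the algebraic recovery of $\tau_*$ all check out, and you rightly flag the boundary case $\beta=\gamma$ (i.e.\ $\frac{p}{q}=1-\frac{2}{r}$) that the open intervals of Theorem~\ref{11} do not cover. The paper itself only cites this classical result from the literature without proof, but your argument is exactly the method the paper uses for its fractional analogues (Theorems~\ref{thm5.1.1} and~\ref{thm5.1.3}) specialized to $\alpha=1$, so there is nothing substantively different to compare.
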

	Its periodic solution and the period doubling route to chaos for different range of parameter are given in \cite{junges2012intricate}. Complex dynamics like bistability, bifurcations, periodic solution and choas are discussed in \cite{duruisseaux2022bistability}. Chaos in Mackey-Glass equation can be controlled by taking suitable controls like  constant perturbation, proportional feedback control, Pyragas control and 
	state dependent delay control \cite{kiss2017controlling}.
	
	\section{Fractional order Mackey-Glass equation}\label{sec5.2}
	Let us consider the fractional order Mackey-Glass equation:
	\begin{equation}\label{eq5.1..1.1.1}
		D^\alpha x(t)=-p x(t)+q \frac{x(t-\tau)}{1+x(t-\tau)^r},\quad 0<\alpha\leq1.
	\end{equation}
	The equilibrium points of equation \eqref{eq5.1..1.1.1} are given by $x_1^*=0$ and $x_2^*=(\frac{q-p}{p})^{1/r}$.
	We have,\[f(x(t),x(t-\tau))=-p x(t)+q \frac{x(t-\tau)}{1+x(t-\tau)^r}.\] 
	So, $\gamma=\partial_1f|_{(x_*,x_*)}=-p$ and $\beta=\partial_2f|_{(x_*,x_*)}=\frac{q(1+(1-r)(x_*)^r)}{(1+(x_*)^r))^2}$.
	\subsubsection{Stability analysis of $x_1^*=0$ } 
	\begin{The}
		The equilibrium point $x_1^*=0$ is unstable $\forall\tau\geq0$.
	\end{The}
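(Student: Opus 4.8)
The plan is to push the problem through the linearization machinery recorded above and then apply Case~2 of Theorem~\eqref{11}. First I would specialise the linearized equation \eqref{yy} (with $c=0$, since \eqref{eq5.1..1.1.1} is a one-term equation) to $x_* = x_1^* = 0$. Using the formulas already computed, $\gamma=\partial_1 f|_{(0,0)}=-p$ and $\beta=\partial_2 f|_{(0,0)}=\dfrac{q\bigl(1+(1-r)\,0^r\bigr)}{\bigl(1+0^r\bigr)^2}=q$, where I use $0^r=0$ for $r>0$. Thus the linearization of \eqref{eq5.1..1.1.1} about $x_1^*$ is precisely equation \eqref{eq5.1.1.1.} with $\gamma=-p<0$ and $\beta=q>0$.

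Next I would invoke the standing hypothesis $q>p$ (the same inequality needed for the nontrivial equilibrium $x_2^*$ to be real). Under it, $\beta=q>p=-\gamma$, hence $\beta\in(-\gamma,\infty)$, and Case~2 of Theorem~\eqref{11} applies verbatim: the linear test equation $D^\alpha\xi(t)=-p\,\xi(t)+q\,\xi(t-\tau)$ is unstable for every $\tau\geq 0$. Since the qualitative behaviour of \eqref{eq5.1..1.1.1} in a neighbourhood of $x_1^*$ coincides with that of its linearization (linearization principle recalled above), it follows that $x_1^*=0$ is unstable for all $\tau\geq 0$, which is the assertion.

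I do not expect a genuine obstacle here; the two places requiring a word of care are (i) the value of $\beta$ at the origin when $0<r<1$, where $y\mapsto y/(1+y^r)$ is still differentiable at $0$ because $y/(1+y^r)=y-y^{1+r}+O(y^{1+2r})$, so its derivative there equals $1$ and the displayed formula for $\beta$ remains valid by continuity; and (ii) stating the hypothesis $q>p$ explicitly, since for $q<p$ one would instead land in Case~3 of Theorem~\eqref{11} and $x_1^*$ would be (delay-independently) stable. As a self-contained alternative I would note that the characteristic function $h(\lambda)=\lambda^\alpha+p-q\,e^{-\lambda\tau}$, viewed on the real ray $\lambda>0$, satisfies $h(0)=p-q<0$ and $h(\lambda)\to+\infty$ as $\lambda\to\infty$, so it has a real positive zero; this root with positive real part already forces instability for every $\tau\geq 0$. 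I would keep the Theorem~\eqref{11} argument as the main one and add this as a remark.
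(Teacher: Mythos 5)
Your proposal is correct and follows essentially the same route as the paper: compute $\gamma=-p$ and $\beta=q$ at the origin, use the standing assumption $q>p$ to place $\beta$ in $(-\gamma,\infty)$, and conclude instability for all $\tau\geq 0$ from Case~2 of Theorem~\ref{11}. Your added remarks (the differentiability of $y\mapsto y/(1+y^r)$ at $0$ for $0<r<1$, and the direct real-root argument for the characteristic function) are sound refinements but not part of the paper's argument.
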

	\begin{proof}
		We have $\gamma=\partial_1f|_{x_1^*}=-p<0$ and $\beta=\partial_1f|_{x_1^*}=q$.\\
		So, we have $\beta>-\gamma$ because $q>p$. \\
		Therefore, $\beta\in(-\gamma,\infty)$ hence from Theorem \eqref{11} Case 2, we get the required result.  
	\end{proof}
	\subsubsection{Stability analysis of $x_2^*=(\frac{q-p}{p})^{1/r}$ }
	\begin{The}\label{thm5.1.1}
		If $\frac{p}{q}>1-\frac{2}{r}$, then $x_2^*$ is asymptotically stable $\forall\tau\geq0$.
	\end{The}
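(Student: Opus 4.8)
The plan is to reduce the claim to Case~3 of Theorem~\eqref{11} applied to the linearization \eqref{yy} of the fractional Mackey--Glass equation \eqref{eq5.1..1.1.1} at the equilibrium $x_2^*$. Recall that for \eqref{eq5.1..1.1.1} the linearization coefficients are $\gamma=\partial_1 f|_{(x_*,x_*)}=-p$ and $\beta=\partial_2 f|_{(x_*,x_*)}=\dfrac{q\bigl(1+(1-r)(x_*)^r\bigr)}{\bigl(1+(x_*)^r\bigr)^2}$, so the only real task is to evaluate $\beta$ at $x_2^*$ and then locate it relative to $\pm|\gamma|=\pm p$.

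First I would substitute $(x_2^*)^r=\dfrac{q-p}{p}$, which is immediate from $x_2^*=\bigl(\tfrac{q-p}{p}\bigr)^{1/r}$. This gives $1+(x_2^*)^r=\dfrac{q}{p}$ and $1+(1-r)(x_2^*)^r=\dfrac{q-r(q-p)}{p}$, so after a short simplification
\[
\beta=\frac{p\bigl(q-r(q-p)\bigr)}{q}=p\left(1-\frac{r(q-p)}{q}\right),\qquad \gamma=-p<0 .
\]
Next I would check the two inequalities $\gamma<\beta<-\gamma$, i.e.\ $-p<\beta<p$. Since $q>p>0$ and $r>0$, the quantity $\dfrac{r(q-p)}{q}$ is strictly positive, so $\beta<p=-\gamma$ holds unconditionally. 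For the lower bound, $\beta>-p=\gamma$ is equivalent to $\dfrac{r(q-p)}{q}<2$, hence to $rp>q(r-2)$, hence (dividing by $qr>0$) to $\dfrac{p}{q}>1-\dfrac{2}{r}$ --- exactly the hypothesis (and this is automatic when $r\le2$, since then the right-hand side is non-positive while $p/q>0$).

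Thus, under the assumption $\frac pq>1-\frac2r$, we have $\beta\in(\gamma,-\gamma)$ with $\gamma<0$, and Case~3 of Theorem~\eqref{11} shows that the linear equation \eqref{yy} is stable for all $\tau\ge0$; by the linearization principle recorded after \eqref{yy}, $x_2^*$ is asymptotically stable for all $\tau\ge0$. I do not expect any genuine obstacle: the entire argument is the algebraic reduction of $\beta$ together with the bookkeeping of the two inequalities. The only point deserving a sentence is that the borderline value $\beta=-\gamma$ is never attained (because $q>p$), so we land strictly inside the delay-independent stable region of Theorem~\eqref{11} rather than on its boundary, which is what licenses the strict (asymptotic) stability conclusion.
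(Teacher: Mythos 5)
Your proposal is correct and follows exactly the paper's own route: compute $\gamma=-p$ and $\beta=p+\frac{rp(p-q)}{q}$ at $x_2^*$, verify $\gamma+\beta<0$ (unconditionally, from $q>p$) and $\beta-\gamma>0$ (from the hypothesis $\frac{p}{q}>1-\frac{2}{r}$), and invoke Case~3 of Theorem~\ref{11} together with the linearization principle. No substantive differences.
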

	\begin{proof}
		We have $\gamma=\partial_1 f(x_2^*,x_2^*)=-p<0$ and $\beta=\partial_2 f(x_2^*,x_2^*)=\frac{a(1+(1-r){x_2^*}^{r})}{(1+{x_2^*}^{r})^2}= (p-q)\frac{r p}{q}+p$.\\ 		
		So, we get \begin{equation}\label{eq5.1.4}
			\gamma+\beta= (p-q)\frac{r p}{q}<0
		\end{equation}
		Also, we have
		\begin{align*} 
			\beta-\gamma&=2p+(p-q)\frac{r p}{q}\\
			&=2p+(\frac{p}{q}-1)r p\\
			&>2 p+(1-\frac{2}{r}-1)r p=0\label{eq5.1.5}
		\end{align*}
		Hence, $\beta>\gamma$ and from equation \eqref{eq5.1.4} we obtained $\beta<-\gamma$.\\ Therefore, we get $\beta\in(\gamma,-\gamma)$. \\
		So, by the Theorem \eqref{11} Case 3, $x_2^*$ is asymptotically stable $\forall\tau\geq0$.
	\end{proof}
	\begin{The}\label{thm5.1.3}
		If $\frac{p}{q}<1-\frac{2}{r}$ then we get a critical value of delay as
		\begin{equation*}
			\tau_*=\dfrac{\arccos\Bigg(\dfrac{\Bigg(-p\cos\Big(\dfrac{\alpha\pi}{2}\Big)+\sqrt{(p+(p-q)\frac{r p}{q})^2-p^2\sin^2\Big(\dfrac{\alpha\pi}{2}}\Big)\Bigg)\cos\dfrac{\alpha\pi}{2}+p}{(p+(p-q)\frac{r p}{q})}\Bigg)}{\Bigg(-p\cos\Big(\dfrac{\alpha\pi}{2}\Big)+\sqrt{(p+(p-q)\frac{r p}{q})^2-p^2\sin^2\Big(\dfrac{\alpha\pi}{2}\Big)}\Bigg)^{1/\alpha}}.
		\end{equation*}
		such that for $0<\tau<\tau_*$, $x_2^*$ is asymptotically stable and for $\tau>\tau_*$,  $x_2^*$ is unstable. 
	\end{The}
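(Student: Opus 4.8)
The plan is to reduce the claim to Case~1 of Theorem~\eqref{11} applied to the linearization of \eqref{eq5.1..1.1.1} at $x_2^*$. First I would recall that, writing $\xi(t)=x(t)-x_2^*$, the linearized equation is
\[
	D^\alpha \xi(t)=\gamma\,\xi(t)+\beta\,\xi(t-\tau),\qquad \gamma=-p,\quad \beta=p+(p-q)\frac{rp}{q},
\]
exactly as computed just before Theorem~\eqref{thm5.1.1}; since the qualitative behaviour of \eqref{eq5.1..1.1.1} near $x_2^*$ matches that of its linearization, it suffices to read off the stability of this equation of the form \eqref{eq5.1.1.1.} from Theorem~\eqref{11}.

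The main algebraic step is to show that the hypothesis $\frac{p}{q}<1-\frac{2}{r}$ places $\beta$ in the regime of Case~1, i.e.\ $\beta\in(-\infty,-|\gamma|)$. Multiplying $\frac{p}{q}<1-\frac{2}{r}$ by $q$ and rearranging gives $p-q<-\frac{2q}{r}$; multiplying this by $\frac{rp}{q}>0$ yields $(p-q)\frac{rp}{q}<-2p$, whence
\[
	\beta=p+(p-q)\frac{rp}{q}<p-2p=-p=\gamma<0 .
\]
Thus $\beta<-p=-|\gamma|$ (and, in passing, $p<q$, so $x_2^*$ is real). This also gives $\beta^2>p^2\ge p^2\sin^2\!\big(\tfrac{\alpha\pi}{2}\big)$, so the square root in the $\tau_*$ formula is real, and it rules out Case~2 (which needs $\beta>-\gamma$) and Case~3 (which needs $\gamma<\beta<-\gamma$).

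It then remains to invoke Case~1 of Theorem~\eqref{11}: the stability region in $(\tau,\gamma,\beta)$-space lies between $\tau=0$ and $\tau=\tau_*$, with a Hopf bifurcation at $\tau_*$. Substituting $\gamma=-p$ and $\beta=p+(p-q)\frac{rp}{q}$ into the Case~1 expression for $\tau_*$ produces precisely the displayed formula, asymptotic stability of $x_2^*$ for $0<\tau<\tau_*$, and instability for $\tau>\tau_*$ (an SSR). The only genuine subtlety is the branch selection in the $1/\alpha$-th root of the denominator (and the matching $\arccos$-argument): one keeps the branch giving the first positive crossing, which is the $+$ sign written in the statement, as justified in \cite{bhalekar2016stability,bhalekar2023can}; beyond that, the argument is the direct substitution above.
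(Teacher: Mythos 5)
Your proposal is correct and follows essentially the same route as the paper: compute $\gamma=-p$ and $\beta=p+(p-q)\frac{rp}{q}$ for the linearization at $x_2^*$, use $\frac{p}{q}<1-\frac{2}{r}$ to conclude $\beta<\gamma=-|\gamma|$, and invoke Case~1 of Theorem~\ref{11}. You simply spell out the algebra (and the reality of the square root and branch choice) that the paper leaves implicit.
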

	\begin{proof}
		When $\frac{p}{q}<1-\frac{2}{r}$ then, we get $\beta<\gamma$.\\
		Also, from Theorem \eqref{thm5.1.1}, $\beta+\gamma<0$.\\ So, we have $\beta\in(-\infty,-|\gamma|)$. \\
		So, from Theorem \eqref{11} Case 1, we get the required result.
	\end{proof}
	\subsection{Chaos in fractional order Mackey-Glass system}		
	If we take $p=1$, $q=2$, $r=10$ and $\alpha=0.9$, then we get three real equilibrium points $x_1^*=0$, $x_2^*=1$ and $x_3^*=-1$. Note that the stability properties of $x_2^*$ and $x_3^*$ will be same as the value of $\gamma$ and $\beta$ are same for both these equilibrium points. Note that here, $\frac{p}{q}<1-\frac{2}{r}$ so from Theorem \eqref{thm5.1.3}, we get $\tau_*=0.4594$. If we take the initial data as $x(t)=0.9$ for $t\in(-\tau,0)$ near $x_2^*$, we get asymptotically stable solution for $\tau=0.3<\tau_*$ (cf. Figure \eqref{figure5.7.1}($a$)) and asymptotic $1-$cycle for $\tau=0.5>\tau_*$ (cf. Figure \eqref{figure5.7.1}($b$)). If we further increase say $\tau=1.8$, we get asymptotic two-cycle (cf. Figure \eqref{figure5.7.1}($d$)). This period doubling leads to chaos (see Figure \eqref{figure5.7.1}($e$)) for $\tau=3.4$).
	\begin{figure}
		\subfloat[$x_{2}^*=1$ is stable for $\tau=0.3$]{%
			\includegraphics[scale=0.45]{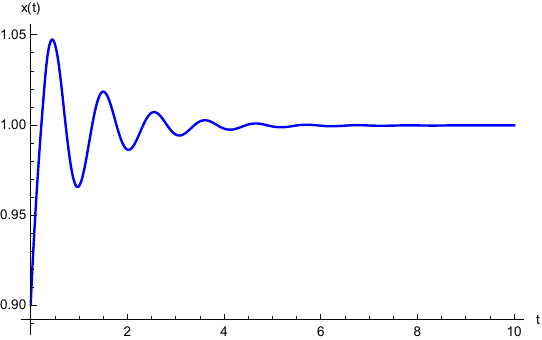}
		}\hspace{0.1 cm}
		\subfloat[$x_{2}^*=1$ is unstable for $\tau=0.5$]{%
			\includegraphics[scale=0.5]{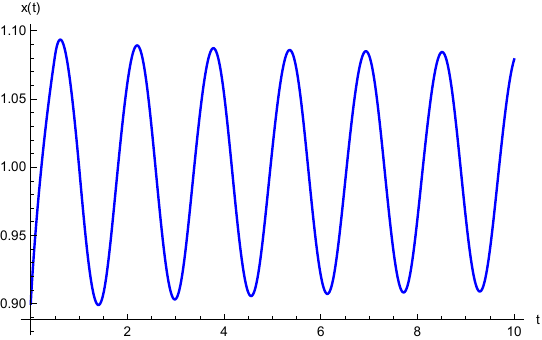}
		}\hspace{0.1 cm}
		\subfloat[Asymptotic period-two for $\tau=1$]{%
			\includegraphics[scale=0.5]{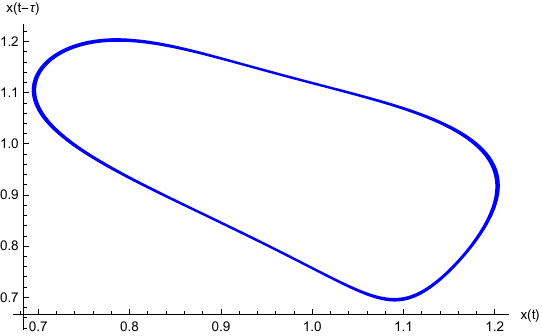}
		}\hspace{0.1 cm}
		\subfloat[Asymptotic one closed orbit for $\tau=1.8$]{%
			\includegraphics[scale=0.5]{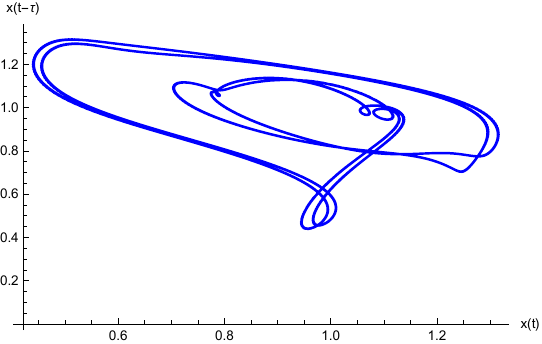}
		}\hspace{0.1 cm}
		\subfloat[ Chaotic atractor for $\tau=3.4$]{%
			\includegraphics[scale=0.5]{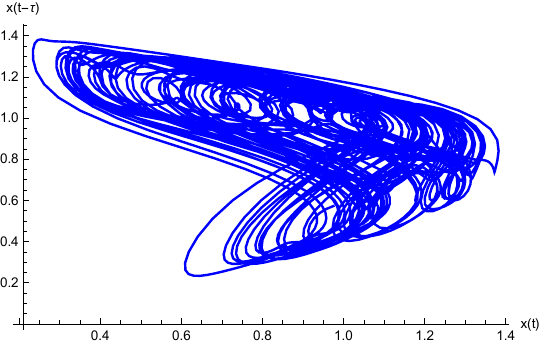}
		}
		\caption{Period doubling in fractional order Mackey-Glass equation \eqref{eq5.1..1.1.1}}
		\label{figure5.7.1}
	\end{figure}
	\subsection{Chaos control in fractional order Mackey-Glass equation}		
	Consider the controlled system:
	\begin{equation}\label{eq5.1.1.4}
		D^\alpha x(t)=-p x(t)+\frac{q x(t-\tau)}{1+x(t-\tau)^r}+k x(t).
	\end{equation}	
	We used the linear feedback control $k x(t)$ in \eqref{eq5.1.1.4}.	
	\begin{The}\label{thm5.1.2}
		The chaos in system \eqref{eq5.1.1.4} can be controlled by setting $k\in(p-q,u)$, 
		where,\[u=
		\begin{cases}
			p  ,\textit{ if } \quad r\leq2\\ p-q(1-\frac{2}{r}), \textit{ if } \quad  r>2. 
		\end{cases}\]
	\end{The}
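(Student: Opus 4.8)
The plan is to reduce the controlled system \eqref{eq5.1.1.4} to the linear framework of Theorem \eqref{11} by computing the linearization at the relevant equilibrium and then translating the hypothesis $k\in(p-q,u)$ into the parameter condition $\beta\in(\gamma,-\gamma)$ with $\gamma<0$, which by Case 3 of Theorem \eqref{11} yields delay-independent asymptotic stability, hence destruction of the chaotic attractor. First I would identify the equilibrium point of the controlled system: setting $D^\alpha x=0$ in \eqref{eq5.1.1.4} gives $(k-p)x_* + \frac{q x_*}{1+x_*^r}=0$, so besides $x_*=0$ we obtain the nonzero branch $x_*=\left(\frac{q}{p-k}-1\right)^{1/r}=\left(\frac{q-p+k}{p-k}\right)^{1/r}$, which is real and positive precisely when $p-k>0$ and $q-p+k>0$, i.e. when $k\in(p-q,p)$ — consistent with the stated interval since $u\le p$. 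Then I would compute the linearization coefficients: with $f(x(t),x(t-\tau))=(k-p)x(t)+\frac{q x(t-\tau)}{1+x(t-\tau)^r}$ we get $\gamma=\partial_1 f|_{x_*}=k-p$ and $\beta=\partial_2 f|_{x_*}=\frac{q(1+(1-r)x_*^r)}{(1+x_*^r)^2}$. Using $1+x_*^r=\frac{q}{p-k}$ and $x_*^r=\frac{q-p+k}{p-k}$, this simplifies (exactly as in the proof of Theorem \eqref{thm5.1.1}) to $\beta=(p-k)+(p-k-q)\frac{r(p-k)}{q}$, i.e. $\beta=(p-k)\left(1-\frac{r(q-p+k)}{q}\right)$.

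Next I would verify the two inequalities needed for Case 3, namely $\gamma<0$, $\beta>\gamma$ and $\beta<-\gamma$. The first is immediate: $\gamma=k-p<0$ since $k<u\le p$. For $\beta+\gamma$: a short computation gives $\beta+\gamma=(k-p)+(p-k)+(p-k-q)\frac{r(p-k)}{q}=(p-k-q)\frac{r(p-k)}{q}$, and since $k<p$ and $k>p-q$ we have $p-k>0$ and $p-k-q<0$, so $\beta+\gamma<0$, i.e. $\beta<-\gamma$. For $\beta-\gamma$: we compute $\beta-\gamma=2(p-k)+(p-k-q)\frac{r(p-k)}{q}=(p-k)\left(2-\frac{r(q-p+k)}{q}\right)$. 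Since $p-k>0$, the sign is governed by $2q-r(q-p+k)>0$, equivalently $p-k>q\left(1-\frac{2}{r}\right)$, i.e. $k<p-q\left(1-\frac{2}{r}\right)$. When $r>2$ this is exactly the condition $k<u$; when $r\le2$ we have $1-\frac{2}{r}\le0$ so $p-q(1-\frac{2}{r})\ge p$ and the inequality $k<p=u$ already suffices. Thus in both cases $\beta>\gamma$, and combining, $\beta\in(\gamma,-\gamma)$ with $\gamma<0$.

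Finally I would invoke Theorem \eqref{11} Case 3 to conclude that the linearization \eqref{yy} (here with $c=0$) is asymptotically stable for all $\tau\ge0$; by the linearization principle stated after equation \eqref{yy}, the equilibrium $x_*$ of the controlled nonlinear system \eqref{eq5.1.1.4} is locally asymptotically stable for every delay, so the period-doubling cascade and chaos observed in the uncontrolled case cannot occur — the chaos is controlled. I expect the only real obstacle to be the algebraic simplification of $\beta$ at the shifted equilibrium and keeping the case split on $r$ straight (particularly checking that the lower endpoint $k=p-q$ is genuinely excluded because there the nonzero equilibrium collides with $x_*=0$, and that the upper endpoint gives the boundary case $\beta=\gamma$ or $\beta=-\gamma$ where stability is lost); everything else is a direct substitution into the already-established Theorem \eqref{11}. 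One should also remark that at $x_*=0$ the controlled system still has $\gamma=k-p<0$ but $\beta=q>0=-\gamma$ fails $\beta<-\gamma$ unless $q<p-k$, which does not hold on our interval, so the zero equilibrium remains unstable — but this does not affect the claim, since controlling chaos only requires the existence of a delay-independent stable equilibrium to which trajectories near it converge.
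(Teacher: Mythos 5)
Your proposal is correct and follows essentially the same route as the paper: linearize at the shifted equilibrium $x_2^*=\bigl(\tfrac{q-p+k}{p-k}\bigr)^{1/r}$, compute $\gamma=-(p-k)$ and $\beta=(p-k)+((p-k)-q)\tfrac{r(p-k)}{q}$, show $\beta\in(\gamma,-\gamma)$ with $\gamma<0$, and invoke Case 3 of Theorem \eqref{11} for delay-independent stability. Your treatment is in fact slightly more careful than the paper's, since you make the $r\le 2$ versus $r>2$ case split explicit and note that the equilibrium degenerates at $k=p-q$, points the paper's proof passes over.
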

	\begin{proof}
		Chaos in \eqref{eq5.1.1.4} can be controlled by stabilizing the unstable equilibrium point $x_2^*=(\frac{q-p-k}{p-k})^(\frac{1}{r}).$ \\
		Near $x_2^*$, the linearized equation of equation \eqref{eq5.1.1.4}, is  \[D^\alpha x(t)=-(p-k)x(t)+((p-k)-q)\frac{r(p-k)}{q}x(t-\tau).\]	So, here $\gamma=-(p-k)$ and $\beta=((p-k)-q)\frac{r(p-k)}{q}+(p-k)$.\\
		Therefore, $\gamma+\beta=((p-k)-q)\frac{r(p-k)}{q}<0$ because $p-q<k<p$.\\
		Hence,\begin{equation}\label{eq5.1.1.7}
			\beta<-\gamma.
		\end{equation}
		Also, \begin{align*}
			\beta-\gamma&=((p-k)-q)\frac{r(p-k)}{q}+2(p-k)\\
			&=(\frac{p-k}{q}-1)r (p-k)+2(p-k)\\
			&>(1-\frac{2}{r}-1)r(p-k)+2(p-k)=0\quad(\because k<p-q(1-\frac{2}{r}))
		\end{align*}
		So,
		\begin{equation}\label{eq5.1.1.6}
			\beta>\gamma.
		\end{equation}
		Hence, from equations \eqref{eq5.1.1.7} and \eqref{eq5.1.1.6}, $\beta\in(\gamma,-\gamma)$ where $\gamma<0$.\\
		Therefore, $x_2^*$ becomes stable for $k\in(p-q,u)$, $\forall\tau\geq0$. Thus, the chaos in \eqref{eq5.1.1.4} is controlled.
	\end{proof}
	Figure \eqref{figure5.7.1}$(e)$ shows chaos in equation \eqref{eq5.1..1.1.1} without control i.e. $k=0$ whereas the controlled system \eqref{eq5.1.1.4} with $k=-0.9$ and $\tau=4$ is given in Figure \eqref{fig5.1.1}. The other parameter values are $p=1$, $q=2$, $r=10$ and $\alpha=0.9$.
	\begin{figure}
		\centering
		\includegraphics[scale=0.6]{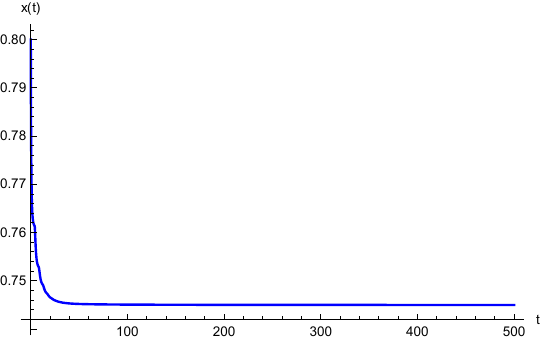}
		\caption{Chaos control in equation \eqref{eq5.1.1.4} with $k=-0.9$, $p=1$, $q=2$, $r=10$ and $\alpha=0.9$}\label{fig5.1.1}
	\end{figure}
	\section{Generalized Mackey-Glass equation}\label{sec5.3}
	Consider the generalized Mackey-Glass equation
	\begin{equation}\label{eq5.1}
		\Big(\alpha-\frac{1}{2}\Big)D^{\alpha} x(t)+d D^{2\alpha}x(t)=-p x(t)+ q \frac{x(t-\tau)}{1+x(t-\tau)^r}, \quad 0<\alpha\leq1,
	\end{equation}
	where the parameter $d\in\mathbb{R}^+$.\\
	The equilibrium points of equation \eqref{eq5.1} are  $x_1^*=0 $ and  $x_2^*=(\frac{q-p}{p})^{1/r}$. 
	\subsection{Stability analysis of the equilibrium point $x_1^*=0$ }
	Near  $x_1^*$, the linearized equation of equation \eqref{eq5.1} is given by 
	\begin{equation}\label{eq5.2}
		D^{\alpha} x(t)+\frac{d}{\alpha-\frac{1}{2}}D^{2\alpha} x(t)=\frac{-p}{\alpha-\frac{1}{2}}x(t)+\frac{q}{\alpha-\frac{1}{2}}x(t-\tau).
	\end{equation} 
	Comparing with equation \eqref{eq5.1.2}, we have $a=\frac{-p}{\alpha-1/2}$, $b=\frac{q}{\alpha-1/2}$ and $c=\frac{d}{\alpha-1/2}$.\\
	So, with the help of Theorem \eqref{mainthm5.1} we get the following results:
	\begin{The}
		The equilibrium point $x_1^*$ is unstable for all $\tau\geq0$.
	\end{The}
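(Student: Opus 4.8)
The plan is to reduce the claim to the linear stability theory already established. First I would take the linearization \eqref{eq5.2} of \eqref{eq5.1} at $x_1^*=0$ and match it against the model two-term FDDE \eqref{eq5.1.2}; as recorded just above the statement this identifies $a=-p/(\alpha-\tfrac12)$, $b=q/(\alpha-\tfrac12)$, $c=d/(\alpha-\tfrac12)$. The only computation that matters is the sign of $a_1:=a+b=(q-p)/(\alpha-\tfrac12)$. Since a real positive equilibrium requires $q>p$, the numerator $q-p$ is strictly positive, and $d>0$ by hypothesis. Hence both $a_1$ and $c$ are a strictly positive quantity divided by $\alpha-\tfrac12$, so they always share the same sign: both positive when $\alpha>\tfrac12$, both negative when $\alpha<\tfrac12$.

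With this in hand I would invoke Theorem \ref{mainthm5.1}(5): whenever $a_1$ and $c$ have the same sign, the linear equation \eqref{eq5.1.2} is unstable for every $\tau\geq0$, for any $b\in\mathbb{R}$ and any $0<\alpha<1$. By the linearization principle stated after \eqref{yy}, the nonlinear equation \eqref{eq5.1} inherits this instability at $x_1^*$ for all $\tau\geq0$. This settles every admissible $\alpha\in(0,1)$ with $\alpha\neq\tfrac12$.

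The value $\alpha=\tfrac12$ needs a separate, short argument, since there the coefficient $\alpha-\tfrac12$ vanishes and the passage to \eqref{eq5.2} is invalid. In that case \eqref{eq5.1} degenerates to the first-order DDE $d\,\dot x(t)=-p x(t)+q\,x(t-\tau)/(1+x(t-\tau)^r)$, whose linearization at $0$ is $\dot x(t)=-(p/d)x(t)+(q/d)x(t-\tau)$. Applying Theorem \ref{11} with $\gamma=-p/d<0$ and $\beta=q/d$, the inequality $q>p$ gives $\beta>-\gamma$, i.e.\ $\beta\in(-\gamma,\infty)$, so Case 2 of Theorem \ref{11} yields instability for all $\tau\geq0$.

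I do not anticipate a genuine obstacle: the proof is sign bookkeeping followed by an appeal to Theorem \ref{mainthm5.1}(5). The two points to be careful about are (i) propagating the sign of $\alpha-\tfrac12$ correctly through $a$, $b$, $c$, and (ii) isolating the degenerate value $\alpha=\tfrac12$ as above. If one additionally wants the boundary value $\alpha=1$, which is not covered by Theorem \ref{mainthm5.1}(5), a short direct analysis of the characteristic equation $d\lambda^2+\tfrac12\lambda+p=q\,e^{-\lambda\tau}$ suffices, starting from the observation that at $\tau=0$ it has a positive real root because $p-q<0$.
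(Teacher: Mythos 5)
Your proof is correct and follows essentially the same route as the paper: identify $a_1=(q-p)/(\alpha-\tfrac12)$ and $c=d/(\alpha-\tfrac12)$, note that $q>p$ and $d>0$ force them to share the same sign, and invoke part (5) of Theorem \ref{mainthm5.1}. Your additional treatment of the degenerate values $\alpha=\tfrac12$ (via Theorem \ref{11}, Case 2) and $\alpha=1$ goes beyond the paper's proof, which silently restricts to $\alpha\neq\tfrac12$, and is a worthwhile refinement.
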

	\begin{proof}
		For $0<\alpha<1/2$, we have $c<0$ and $a_1=\frac{q-p}{\alpha-\frac{1}{2}}<0.$\\
		For $1/2<\alpha<1$, $c>0$ and $a_1>0$.\\
		$\therefore$ By condition ($5$) of Theorem \eqref{mainthm5.1} the equilibrium point $x_1^*$ is unstable $\forall\tau\geq0$.
	\end{proof}
	\subsection{Stability analysis of the equilibrium point $x_2^*=(\frac{q-p}{p})^{1/r}$  }
	The linearized equation of equation \eqref{eq5.1} near the equilibrium point $x_2^*$, is given by 
	\begin{equation}\label{5.1.1.1.1.1}
		D^\alpha x(t)+\frac{d}{\alpha-\frac{1}{2}}D^{2\alpha}x(t)=-\frac{p}{\alpha-\frac{1}{2}}x(t)+\frac{p+(p-q)\frac{r p}{q}}{\alpha-\frac{1}{2}} x(t-\tau).
	\end{equation}
	When we compare equation \eqref{5.1.1.1.1.1} with the equation \eqref{eq5.1.2}, we have $c=\frac{d}{\alpha-
		\frac{1}{2}}$, $a=-\frac{p}{\alpha-\frac{1}{2}}$, $b=\frac{p+(p-q)\frac{r p}{q}}{\alpha-\frac{1}{2}}$ and $a_1=\frac{(p-q)\frac{r p}{q}}{\alpha-\frac{1}{2}}$. 
	\begin{The}\label{thm5.1}
		If $r<\frac{q}{q-p}$ and 
		\begin{itemize} 
			\item[\ding{111}] $0<\alpha<\frac{1}{2}$, then
			we get a critical value of $d$ viz. $d_0=c_0(\alpha-\frac{1}{2})$ such that-
			\begin{itemize}
				\item[\ding{118}] if $d_0<d<\infty$ then we get the bifurcation values of $a_1$ on the curves $\Gamma_{13}$, $\Gamma_{2}$, $\Gamma_{12}$, $\Gamma_{6}$ i.e. $a_{11}$, $a_{12}$, $a_{13}$ and $a_{14}$ such that-
				\begin{itemize}
					\item [$\bullet$] $a_1<a_{11}\Longrightarrow$ $x_2^*$ is unstable $\forall\tau\geq0$
					\item [$\bullet$] $a_{11}<a_1<a_{12}\Longrightarrow$ we get IS
					\item [$\bullet$] $a_{12}<a_1<a_{13}\Longrightarrow$ we get SSR
					\item [$\bullet$] $a_{13}<a_1<a_{14}\Longrightarrow$ we get SS and
					\item [$\bullet$] $a_1>a_{14}\Longrightarrow$ $x_2^*$ stable $\forall\tau\geq0$.
				\end{itemize}
				\item[\ding{118}] when $0<d<d_0$, then we get a critical values of $a_1$ along the curves  $\Gamma_{13}$, $\Gamma_{2}$ and $\Gamma_{6}$ i.e. $a_{15}$, $a_{16}$ and $a_{17}$ such that-\begin{itemize}
					\item [$\bullet$] $a_1<a_{15}\Longrightarrow$ $x_2^*$ is unstable $\forall\tau\geq0$.
					\item[$\bullet$]  $a_{15}<a_1<a_{16}\Longrightarrow$ gives IS
					\item[$\bullet$]  $a_{16}<a_1<a_{17}\Longrightarrow$ gives SS and
					\item[$\bullet$]  $a_{17}>a_1\Longrightarrow$ gives stable solution $\forall\tau\geq0$.
				\end{itemize}
			\end{itemize}
			\item[\ding{111}] $\frac{1}{2}<\alpha<1$ then we get a critical values of $d$ viz. $d_1=c_1(\alpha-\frac{1}{2})$ and $d_2=c_2(\alpha-\frac{1}{2})$ such that-
			\begin{itemize}
				\item[\ding{118}] if $d_1<d<\infty$, we get a bifurcation values of $a_1$ along the curves $\Gamma_7$ and $\Gamma_{14}$ say $a_{21}$ and $a_{22}$ such that-
				\begin{itemize}
					\item[$\bullet$] $a_1<a_{21}\Longrightarrow$ $x_2^*$ is stable $\forall\tau\geq0$
					\item[$\bullet$] $a_{21}<a_1<a_{22}$ we get SS and
					\item[$\bullet$] $a_{22}<a_1<0$ we get SSR.
				\end{itemize}
				\item[\ding{118}] if $d_2<d<d_1$, then we get only one bifurcation value of $a_1$ i.e. $a_{23}$ along the curve $\Gamma_7$ so that-\begin{itemize}
					\item $a_1<a_{23}\Longrightarrow$ gives $x_2^*$ stable $\forall\tau\geq0$ and
					\item $a_{23}<a_1<0\Longrightarrow$ we get SS.
				\end{itemize}
				\item[\ding{118}] $0<d<d_2$ then $x_2^*$ is stable $\forall\tau\geq0$.
			\end{itemize}
		\end{itemize}
	\end{The}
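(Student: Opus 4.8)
The plan is to deduce the whole statement from Theorem~\ref{mainthm5.1} applied to the linearisation \eqref{5.1.1.1.1.1}, using the identification already recorded in the text: $c=\frac{d}{\alpha-1/2}$, $a=-\frac{p}{\alpha-1/2}$, $b=\frac{p+(p-q)rp/q}{\alpha-1/2}$ and $a_1=a+b=\frac{(p-q)rp/q}{\alpha-1/2}$. First I would extract the elementary sign information forced by $q>p>0$ together with the hypothesis $r<\frac{q}{q-p}$. Rewriting the hypothesis as $(q-p)\frac{r}{q}<1$ shows that the numerator of $b$, namely $p+(p-q)\frac{rp}{q}=p\bigl(1-(q-p)\tfrac{r}{q}\bigr)$, is strictly positive, while the numerator of $a_1$, namely $(p-q)\frac{rp}{q}$, is strictly negative; moreover $d>0$ by hypothesis.

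Next I would split on the sign of $\alpha-\frac12$. If $0<\alpha<\frac12$ the common denominator $\alpha-\frac12$ is negative, so $c<0$, $b<0$ and $a_1>0$: these are precisely the hypotheses of part~(1) of Theorem~\ref{mainthm5.1} (with $0<\alpha<1/2$), which furnishes a single critical value $c_0<0$ of $c$, the four-threshold regime for $c\in(-\infty,c_0)$ and the three-threshold regime for $c\in(c_0,0)$, together with the listed bifurcation curves $\Gamma_{13},\Gamma_2,\Gamma_{12},\Gamma_6$ (resp.\ $\Gamma_{13},\Gamma_2,\Gamma_6$) and the unstable/IS/SSR/SS/stable classification in terms of $a_{11},\dots,a_{17}$. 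If $\frac12<\alpha<1$ the denominator is positive, so $c>0$, $b>0$ and $a_1<0$, which matches part~(3) of Theorem~\ref{mainthm5.1}; this supplies two critical values $c_1>c_2>0$, the curves $\Gamma_7$, $\Gamma_{14}$, and the corresponding three sub-cases with thresholds $a_{21},a_{22},a_{23}$. In both cases the stability conclusions are quoted verbatim; only the names of the critical parameter values need translating.

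The one genuine point requiring care is the passage from a critical value of $c$ to the stated critical value of $d$ and the orientation of the resulting intervals. Since $d\mapsto c=\frac{d}{\alpha-1/2}$ is affine with slope $1/(\alpha-\tfrac12)$, it is strictly increasing when $\frac12<\alpha<1$, so with $d_2=c_2(\alpha-\tfrac12)$ and $d_1=c_1(\alpha-\tfrac12)$ one has $0<d_2<d_1$ and the intervals $0<d<d_2$, $d_2<d<d_1$, $d_1<d<\infty$ correspond respectively to $0<c<c_2$, $c_2<c<c_1$, $c_1<c<\infty$, which is exactly the ordering asserted in the second bullet; here $d_1,d_2>0$ because $c_1,c_2>0$ and $\alpha-\tfrac12>0$. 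When $0<\alpha<\frac12$ the map is strictly \emph{decreasing}: with $d_0=c_0(\alpha-\tfrac12)$, which is positive because $c_0<0$ and $\alpha-\tfrac12<0$, the interval $d_0<d<\infty$ corresponds to $c\in(-\infty,c_0)$ — the four-threshold regime — and $0<d<d_0$ corresponds to $c\in(c_0,0)$ — the three-threshold regime — matching the first bullet. Thus all stated thresholds genuinely lie in the admissible set $d\in\mathbb{R}^+$, and the main (essentially only) obstacle is bookkeeping: tracking the sign of $\alpha-\frac12$ so that the reversal of interval orientation for $\alpha<\frac12$ is handled correctly. There is no analytic content beyond Theorem~\ref{mainthm5.1}.
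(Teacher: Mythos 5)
Your proposal is correct and follows essentially the same route as the paper: establish the signs of $b$, $a_1$, and $c$ from $q>p$, $r<\frac{q}{q-p}$, and the sign of $\alpha-\frac12$, then invoke parts (1) and (3) of Theorem~\ref{mainthm5.1}. In fact you are more careful than the paper on the one delicate point — the paper's proof writes the $d$-ranges as $d\in(-\infty,d_0)$ and $d\in(d_0,0)$, simply transplanting the $c$-intervals, whereas your observation that $d\mapsto c$ reverses orientation when $\alpha<\frac12$ (so that $c\in(-\infty,c_0)$ corresponds to $d\in(d_0,\infty)$ with $d_0>0$) is what actually justifies the intervals as stated in the theorem.
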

	\begin{proof}
		\begin{itemize}
			\item [Case 1]  We have $0<\alpha<1/2$. \\
			Since, $r<\frac{q}{q-p}$ then 
			$(p-q)r>-q$ or $(p-q)\frac{ r p}{q}>-p$. \\
			So, we have $p+(p-q)\frac{r p}{q}>0$.\\
			Hence, we get $b<0$. \\
			Moreover, $a_1=\frac{(p-q)\frac{r p}{q}}{\alpha-\frac{1}{2}}>0$ and $c<0$.\\
			$\therefore$ By using $(1)$ of Theorem \eqref{mainthm5.1}, we get a critical value of $d$ say $d_0=c_0(\alpha-\frac{1}{2})$ such that if $d\in(-\infty,d_0)$, then we get the bifurcation values of $a_1$ along the curves $\Gamma_{13}$, $\Gamma_2$, $\Gamma_{12}$ and $\Gamma_6$ such that the stability of $x_2^*$ depends on these curves. If $d\in(d_0,0)$, then we get the bifurcation values of $a_1$ along the curves $\Gamma_{13}$, $\Gamma_2$, $\Gamma_{6}$ where the stability depends on this curves.    
			\item[Case 2] We have $1/2<\alpha<1$.\\
			Further $r<\frac{q}{q-p}$ $\Rightarrow p+(p-q)\frac{r p}{q}>0$.\\
			So, for $1/2<\alpha<1$, we get $b>0$, $a_1<0$ and $c>0$. So, from  Theorem \eqref{mainthm5.1}$(3)$, it completes the proof.
		\end{itemize}
	\end{proof}
	\begin{The}\label{thm5.4}
		When $r>\frac{q}{q-p}$ and
		\begin{itemize}
			\item  $0<\alpha<\frac{1}{2}$ with $a>0$, then for a fixed $d$ we have the bifurcation values of $a_1$ on the curves $\Gamma_2$, $\Gamma_{11}$ and $\Gamma_5$ say $a_{18}$, $a_{19}$ and $a_{20}$ respectively, such that-
			\begin{itemize}
				\item[$\bullet$] $a_1<a_{18}\Longrightarrow$ $x_2^*$ is unstable $\forall\tau\geq0$
				\item[$\bullet$] $a_{18}<a_1<a_{19}\Longrightarrow$ we get SSR
				\item[$\bullet$] $a_{19}<a_1<a_{20}\Longrightarrow$ we get SS and
				\item[$\bullet$] $a_{20}<a_1$ then we get $x_2^*$ stable for all $\tau\geq0$.
			\end{itemize}
			\item $1/2<\alpha<1$ with $a<0$. We obtained critical values of $d$ i.e. $d_7=c_7(\alpha-\frac{1}{2})$ and $d_5=c_5(\alpha-\frac{1}{2})$. So, that-
			\begin{itemize}
				\item[\ding{118}] if $d\in(d_7,\infty)$ we have $a_1$ values on the curves $\Gamma_9$ and $\Gamma_{16}$ i.e. $a_{24}$ and $a_{25}$ such that-
				\begin{itemize}
					\item[$\bullet$] when $a_1<a_{24}\Longrightarrow$ $x_2^*$ is stable $\forall\tau\geq0$
					\item[$\bullet$] when $a_{24}<a_1<a_{25}\Longrightarrow$ $x_2^*$ is in SS and
					\item[$\bullet$] when $a_{25}<a_1<0\Longrightarrow$ $x_2^*$ gives SSR.
				\end{itemize}
				\item[\ding{118}] when $d\in(d_5,d_7)$ then we will have three critical value of $a_1$ along the bifurcation curves $a_1=2b$, $\Gamma_9$ and $\Gamma_{16}$ say $a_{26}$, $a_{27}$ and $a_{28}$ such that-
				\begin{itemize}
					\item[$\bullet$] $a_1<a_{26}\Longrightarrow$ $x_2^*$ stable $\forall\tau\geq0$.
					\item[$\bullet$] $a_{26}<a_1<a_{27}\Longrightarrow$ $x_2^*$ is in SSR.
					\item[$\bullet$] $a_{27}<a_1<a_{28}\Longrightarrow$ $x_2^*$ is in SS.
					\item[$\bullet$] $a_{27}<a_1<0\Longrightarrow$ $x_2^*$ is in SSR.
				\end{itemize}
				\item[\ding{118}]	when $d\in(0,d_5)$ then we get only one critical value of $a_1$ along the curve $a_1=2b$ such that-
				\begin{itemize}
					\item[$\bullet$] if $a_1<2b\Longrightarrow$ $x_2^*$ gives stable solution $\forall\tau\geq0$.
					\item[$\bullet$] if $2b<a_1<0\Longrightarrow$ $x_2^*$ gives SSR.
				\end{itemize}
			\end{itemize}
		\end{itemize}
	\end{The}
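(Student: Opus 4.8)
The plan is to reduce the entire statement to Theorem \ref{mainthm5.1} by a sign computation on the coefficients of the linearization \eqref{5.1.1.1.1.1}. Written in the form \eqref{eq5.1.2} it has
$$c=\frac{d}{\alpha-\frac12},\qquad a=\frac{-p}{\alpha-\frac12},\qquad b=\frac{p+(p-q)\frac{rp}{q}}{\alpha-\frac12},\qquad a_1=\frac{(p-q)\frac{rp}{q}}{\alpha-\frac12},$$
so I first want the signs of the four numerators. The key identity is $p+(p-q)\frac{rp}{q}=\frac{p}{q}\bigl(q-r(q-p)\bigr)$; together with $q>p$ (needed for $x_2^*$ to be a real positive equilibrium), this shows that the hypothesis $r>\frac{q}{q-p}$, equivalently $r(q-p)>q$, forces $p+(p-q)\frac{rp}{q}<0$. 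Since also $(p-q)\frac{rp}{q}<0$ unconditionally and $d>0$, the numerators of $c,b,a_1,a$ carry signs $+,-,-,-$, and dividing by $\alpha-\frac12$ fixes all four signs, with a uniform flip depending on whether $\alpha<\frac12$ or $\alpha>\frac12$.

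For $0<\alpha<\frac12$ one has $\alpha-\frac12<0$, giving $a>0$, $b>0$, $c<0$, $a_1>0$: exactly the hypothesis of part (2) of Theorem \ref{mainthm5.1}. Quoting that part then delivers the bifurcation values $a_{18},a_{19},a_{20}$ on the curves $\Gamma_2,\Gamma_{11},\Gamma_5$ and the chain unstable $\Rightarrow$ SSR $\Rightarrow$ SS $\Rightarrow$ stable as $a_1$ increases, for any fixed $d$ (equivalently fixed $c$) — which is the first bullet of the theorem, and no critical value of $d$ appears because part (2) produces none.

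For $\frac12<\alpha<1$ one has $\alpha-\frac12>0$, giving $a<0$, $b<0$, $c>0$, $a_1<0$: the hypothesis of part (4) of Theorem \ref{mainthm5.1}. That part provides two critical values $0<c_5<c_7$ of $c$ and, in each of the three $c$-ranges, the bifurcation structure in $a_1$ on the indicated curves. Since $d=c(\alpha-\frac12)$ is here an orientation-preserving linear rescaling, setting $d_7=c_7(\alpha-\frac12)$ and $d_5=c_5(\alpha-\frac12)$ gives $0<d_5<d_7$ and transports the ranges $(c_7,\infty),(c_5,c_7),(0,c_5)$ to $(d_7,\infty),(d_5,d_7),(0,d_5)$, inheriting the behaviour in each range (and the curves on which $a_{24},\dots,a_{28}$ lie) verbatim; this will finish the proof.

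The step I expect to be the main obstacle is the sign bookkeeping when $\alpha-\frac12<0$: each inequality reverses upon division, so I would have to check carefully that the reversed quadruple $(a,b,c,a_1)$ lands in the hypotheses of case (2) and not in case (1) or the ``same-sign'' exclusion of case (5); dually, in the $\alpha>\frac12$ branch I must confirm that $c\mapsto d=c(\alpha-\frac12)$ is increasing so that the ordering $d_5<d_7$ is the correct one. Everything beyond those checks is a direct citation of Theorem \ref{mainthm5.1}.
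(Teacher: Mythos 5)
Your proposal is correct and follows essentially the same route as the paper: compute the sign of $p+(p-q)\frac{rp}{q}$ from $r>\frac{q}{q-p}$ (your factorization $\frac{p}{q}\bigl(q-r(q-p)\bigr)$ is just a repackaging of the paper's inequality chain), track the sign flip through division by $\alpha-\frac12$, and land in parts (2) and (4) of Theorem \ref{mainthm5.1} for the two ranges of $\alpha$. The only cosmetic difference is that you make the monotone rescaling $d=c(\alpha-\frac12)$ and the ordering $d_5<d_7$ explicit, which the paper leaves implicit.
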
 		
	\begin{proof}
		\begin{itemize}
			\item[Case 1] Consider $0<\alpha<\frac{1}{2}$.\\
			Since, $r>\frac{q}{q-p}$, we have $(p-q)r<-q$.\\ Therefore, we get $(p-q)\frac{r p}{q}<-p$.\\
			Hence, $p+(p-q)\frac{r p}{q}<0$.\\
			So, we get $b>0$, $c<0$ and $a_1>0$.\\
			By employing the Theorem \eqref{mainthm5.1}$(2)$, we conclude that for a fixed $d\in(0,\infty)$, there exists a bifurcation value $a_{18}$ of $a_1$ on the curves $\Gamma_2$, $a_{19}$ on $\Gamma_{11}$ and $a_{20}$ on $\Gamma_{5}$ such that if
			$0<a_1<a_{18}$ we get $x_2^*$ unstable $\forall\tau\geq0$.\\
			Similarly, we can prove other results.
			
			\item[Case 2] Consider $\frac{1}{2}<\alpha<1$.\\
			We have $r>\frac{q}{q-p}$ then we get, $p+(p-q)\frac{r p}{q}<0$.\\
			So, $b<0$, $a_1<0$ and $c>0$.\\
			Therefore, the proof follows from Theorem \eqref{mainthm5.1} $(4)$.\\
			Hence, from Theorem \eqref{mainthm5.1} $(4)$, we get critical values of $d$ viz. $d_7$ and $d_5$ such that if we choose $d\in(d_7,\infty)$, then along the curve $\Gamma_9$ and $\Gamma_{16}$, we get critical values of $a_1$ i.e. $a_{24}$ and $a_{25}$ respectively. When $a_1<a_{24}$, $x_2^*$ is asymptotically stable $\forall\tau\geq0$.\\
			If $a_{24}<a_1<a_{25}$ then we get SS.\\ 
			On the other hand, $a_{25}<a_1<0$ gives SSR.\\
			In the similar way, we can prove other results.
		\end{itemize} 
	\end{proof} 		
	
	\subsection{Different types of chaotic attractors in the generalized Mackey-Glass equation}\label{sec5.5}
	\begin{itemize}
		\item	If we take $p=1.6$, $q=4$, $r=10$, $d=4$ and $\alpha=0.9$ then we get three real equilibrium points i.e. $0$, $-1.04138$ and $1.04138$. If we take the equilibrium point $1.04138$ then $a=-4$, $b=-20$ and $c=10$. If, $r>\frac{q}{q-p}$ then by the Theorem \eqref{thm5.4}, critical values $c_7$ and $c_5$ are $0.0550834$ and $0.0523702$ respectively for the given $b$ and $\alpha$. Since we have $c=10>c_7$, we get two bifurcation curves $\Gamma_9$ and $\Gamma_{16}$. Hence, $a_{24}=-77.2995$ and $a_{25}=-47.8727$. Since, we have chosen $a_1=a+b=-24$ hence, we are in the SSR region. So, from \cite{bhalekar2024stability} the critical value of $\tau_*=0.281952$. The $x_2^*=1.04138$ is stable for $\tau=0.2$ with the initial condition $x(t)=0.5$ and $\dot{x}(t)=0.5$ for $t\in(-\tau,0)$ as given in Figure \eqref{figure5.7}$(a)$. Double scroll chaotic attractor for $\tau=2.7$ is given in Figure \eqref{figure5.7}$(b)$.  \\ 
		
		\item On the other hand, if we take $p=0.4$, $q=0.8$, $r=10$, $d=0.08$ and $\alpha=0.9$, the three real equilibrium points are $0$, $1$ and $-1$. We get $a=-1$, $b=-4$ and $c=0.2$, if we take the equilibrium point $1$. If, $r>\frac{q}{q-p}$, then we get two critical values of $c$ viz. $c_7=0.324185$ and $c_5=0.261851$ from Theorem \eqref{thm5.4}. We are taking $c<c_5$, therefore we get only one bifurcation curve i.e. $a_1=2b$. Here, we have $a_1=-5>2b$ which is in SSR and the critical value of $\tau_*$ is $0.376184$ \cite{bhalekar2024stability}. Note that the stability behavior of the equilibrium points $1$ and $-1$ are same. So, if we choose initial condition near $-1$ then the stable solution for $\tau=0.3$ is given in Figure \eqref{figure5.7}$(c)$ with the initial data $x(t)=0.5$ and $\dot{x}(t)=0.5$ for $t\in(-\tau,0)$. The co-existing chaotic attractors for $\tau=3$ are given in Figure \eqref{figure5.7}$(d)$ with the one set of initial data $x(t)=\dot{x}(t)=0.5$ and another set of initial data as $x(t)=\dot{x}(t)=-0.5$ for $t\in(-\tau,0)$ respectively.
	\end{itemize}
	\begin{figure}
		\subfloat[$x_{2,n}^*$ is stable for $\tau=0.2$]{%
			\includegraphics[scale=0.6]{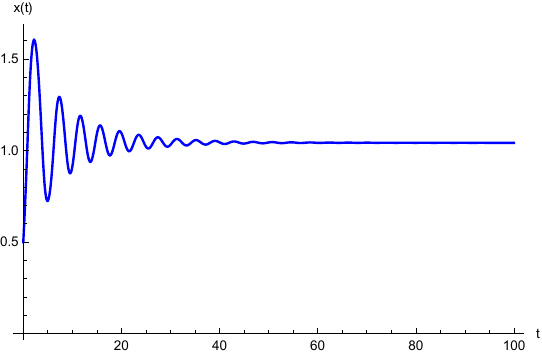}
		}\hspace{0.5 cm}
		\subfloat[double scroll attractor for $\tau=2.7$]{%
			\includegraphics[scale=0.6]{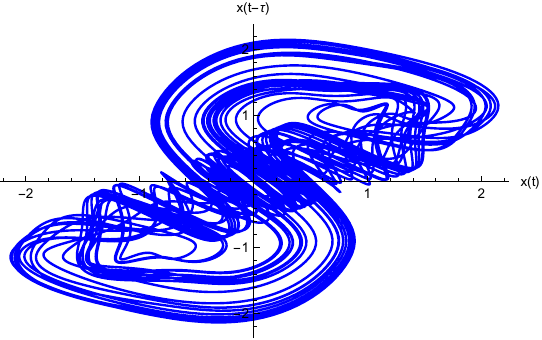}
		}\hspace{0.5 cm}
		\subfloat[stable solution for $\tau=0.3$]{%
			\includegraphics[scale=0.6]{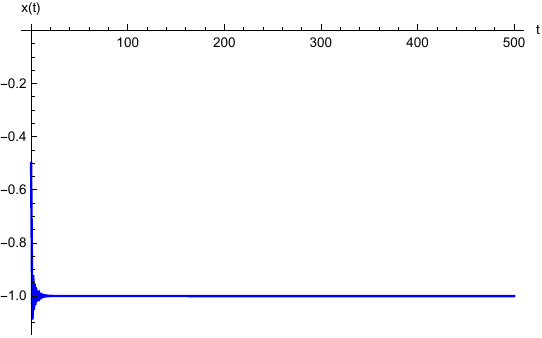}
		}\hspace{0.9 cm}
		\subfloat[co-existing chaotic atractor for $\tau=3$]{%
			\includegraphics[scale=0.6]{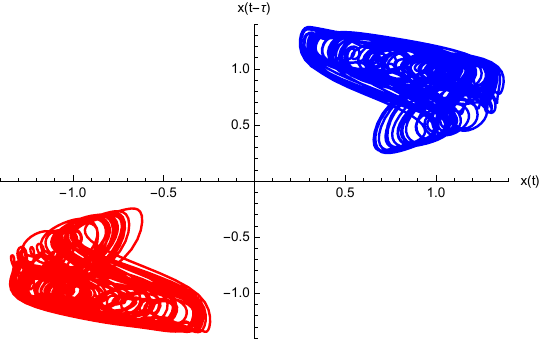}
		}
		\caption{Two types of chaotic attractors in the generalized Mackey-Glass equation}
		\label{figure5.7}
	\end{figure}
	\subsection{Chaos control}\label{sec5.6}  	
	In this section, if we consider the linear feedback control in the equation \eqref{eq5.1}. The controlled system is defined as
	\begin{equation}\label{eq5.1.1.1.1}
		\Big(\alpha-\frac{1}{2}\Big)D^{\alpha} x(t)+d D^{2\alpha}x(t)=-p x(t)+ q \frac{x(t-\tau)}{1+x(t-\tau)^r}+ k x(t),
	\end{equation}
	where $k\in\mathbb{R}$ is a control parameter.\\
	The equilibrium points of equation \eqref{eq5.1.1.1.1} are $x_1^*=0$ and $x_2^*=\Big(\frac{q-(p-k)}{p-k}\Big)^{1/r}.$\\
	The linearized equation near the equilibrium point $x_2^*$ is given by
	\begin{equation}\label{eq5.1.1.1.2}
		\Big(\alpha-\frac{1}{2}\Big)D^{\alpha} x(t)+d D^{2\alpha}x(t)=-(p-k) x(t)+((p-k)-q)\frac{r(p-k)}{q}x(t-\tau),
	\end{equation} 
	here we get $c=\frac{d}{\alpha-\frac{1}{2}}$, $a=\frac{-(p-k)}{\alpha-\frac{1}{2}}$,  $b=\frac{((p-k)-q)\frac{r(p-k)}{q}+(p-k)}{\alpha-\frac{1}{2}}$ and $a_1=\frac{((p-k)-q)\frac{r(p-k)}{q}}{\alpha-\frac{1}{2}}$.
	\begin{The}\label{thm5.1.1.1}
		If we take $1/2<\alpha<1$, $k\in(p-q(1-\frac{1}{r}),p)$ and $r>1$ then there exists a critical value of $d$ as $d_7=c_7(\alpha-\frac{1}{2})$ such that-
		\begin{itemize}
			\item when we fix $d\in(d_7,\infty)$ and choose $a_1$ less than $a_{24}$ (where $a_{24}$ is the bifurcation value of $a_1$ on the curve $\Gamma_9$) then we can control chaos in system \eqref{eq5.1.1.1.1}.
			\item when $d\in(0,d_7)$, if we choose $a_1$ less than $2b$ then chaos in system \eqref{eq5.1.1.1.1} is controlled. 
		\end{itemize} 
	\end{The}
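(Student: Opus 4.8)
The plan is to linearize the controlled system \eqref{eq5.1.1.1.1} about $x_2^*=\big((q-(p-k))/(p-k)\big)^{1/r}$ --- which is already carried out in \eqref{eq5.1.1.1.2} --- and then match its coefficients $a,b,c,a_1$ against the two-term FDDE \eqref{eq5.1.2}, so that Theorem~\ref{mainthm5.1}(4) applies. Controlling the chaos means turning the (currently unstable) equilibrium $x_2^*$ into an asymptotically stable one; since the qualitative behaviour near $x_2^*$ is governed by its linearization, it suffices to place \eqref{eq5.1.1.1.2} into a stability regime of Theorem~\ref{mainthm5.1}.

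First I would pin down the signs of $a$, $b$, $c$, $a_1$. From $1/2<\alpha<1$ we have $\alpha-\tfrac12>0$; from $k<p$ we get $p-k>0$, hence $a=-(p-k)/(\alpha-\tfrac12)<0$; and from $d\in\mathbb{R}^+$ we get $c=d/(\alpha-\tfrac12)>0$. For $b$ I would rewrite its numerator as $(p-k)\big[\tfrac{r(p-k)}{q}-(r-1)\big]$, which is negative exactly when $p-k<q(1-\tfrac1r)$, i.e.\ when $k>p-q(1-\tfrac1r)$, the left endpoint of the admissible interval; hence $b<0$. For $a_1$, since $r(p-k)/\big(q(\alpha-\tfrac12)\big)>0$, we have $a_1<0$ iff $(p-k)-q<0$, i.e.\ $k>p-q$; because $r>1$ forces $q(1-\tfrac1r)<q$, the lower endpoint $p-q(1-\tfrac1r)$ already lies above $p-q$, so $k>p-q(1-\tfrac1r)$ gives $a_1<0$ automatically. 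Thus \eqref{eq5.1.1.1.2} falls exactly into the case $b<0$, $1/2<\alpha<1$, $c>0$, $a_1<0$ of Theorem~\ref{mainthm5.1}(4).

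Then I would invoke Theorem~\ref{mainthm5.1}(4) directly. It yields critical values $c_7$ and $c_5$ of the parameter $c$; through $c=d/(\alpha-\tfrac12)$ these translate to $d_7=c_7(\alpha-\tfrac12)$ and $d_5=c_5(\alpha-\tfrac12)$. For $d\in(d_7,\infty)$ (equivalently $c>c_7$) the curve $\Gamma_9$ carries the bifurcation value $a_{24}$, and $a_1<a_{24}$ makes $x_2^*$ asymptotically stable for all $\tau\ge0$. For $0<d<d_7$ one splits into $d\in(d_5,d_7)$ and $d\in(0,d_5)$; in the first, the curve $a_1=2b$ carries the bifurcation value $a_{26}=2b$, and in the second, $a_1=2b$ is the only bifurcation --- so in both, $a_1<2b$ yields $x_2^*$ asymptotically stable for all $\tau\ge0$. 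In every case $x_2^*$ becomes stable independently of $\tau$, the chaotic attractor of \eqref{eq5.1.1.1.1} disappears, and the chaos is controlled.

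The computations above are routine; the only point that needs a little care is checking that $r>1$ is exactly what makes the admissible $k$-interval sit inside $(p-q,p)$, so that $b<0$ and $a_1<0$ can hold simultaneously, and then noting that the merged range $d\in(0,d_7)$ in the statement is the union of the two finer sub-cases of Theorem~\ref{mainthm5.1}(4), both of which give stability under the single common condition $a_1<2b$.
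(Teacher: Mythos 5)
Your proposal is correct and follows essentially the same route as the paper: verify $b<0$, $a_1<0$, $c>0$ from the hypotheses $k\in(p-q(1-\frac{1}{r}),p)$, $r>1$, $1/2<\alpha<1$, and then invoke Theorem~\ref{mainthm5.1}(4) to place $x_2^*$ in the delay-independent stable regime. Your explicit split of $(0,d_7)$ into $(d_5,d_7)$ and $(0,d_5)$, with the observation that $a_{26}=2b$ so that $a_1<2b$ covers both sub-cases, is a slightly more careful rendering of a step the paper states without elaboration.
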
 
	\begin{proof} 
		We have $p-q<p-q(1-\frac{1}{r})<k<p$, hence $x_2^*\in\mathbb{R}$.\\
		Also, $k>p-q(1-\frac{1}{r})$ so we get $p-k-q(1-\frac{1}{r})<0$.
		\begin{align*}
			((p-k)-q)\frac{r}{q}+1<0\quad\textnormal{and}\\
			((p-k)-q)\frac{r}{q}(p-k)+(p-k)<0.
		\end{align*}
		So, we have $b<0$ and $a_1<0$.\\
		Hence, by the Theorem \eqref{mainthm5.1}$(4)$, we get critical value of $d$ as $d_7=c_7(\alpha-1/2)$ such that if we fixed $d>d_7$ and $a_1$ less than $a_{24}$ then $x_2^*$ is stable for all $\tau\geq0$.\\
		By stabilizing $x_2^*$ we can control chaos in system \eqref{eq5.1.1.1.1}.\\
		Whereas if we choose $0<d<d_7$ then for any $a_1<2b$ we get stable equilibrium point $x_2^*$ for all $\tau\geq0$.\\
		Hence, we can control chaos in the system \eqref{eq5.1.1.1.1}. 
	\end{proof}
	Figure \eqref{figure5.7}$(b)$ shows	chaos in system \eqref{eq5.1} for $p=1.6$, $q=4$, $r=10$, $d=4$ with $\alpha=0.9$ and $k=0$. For these parameters we get $k\in(-2,1.6)$ (Theorem \eqref{thm5.1.1.1}). Therefore, for $k=-1.9$ we get $a=-8.75$, $b=-2.1875$ and $a_1=-10.9375$. We get $c_7=0.50362$ hence, $d_7=0.201448$. Note that here $d>d_7$. We get $a_{24}=-6.24623$, so $a_1<a_{24}$ hence, we get $x_2^*=0.823171$ stable $\forall\tau\geq0$. For $\tau=0.1$ the stable solution is given in Figure \eqref{figure5.8}$(a)$.\\
	If we take another example where we get chaos for $k=0$ in section \eqref{sec5.5} is when $p=0.4$, $q=0.8$, $r=10$, $d=0.08$ and $\alpha=0.9$. We get $k\in(-0.32,0.4)$ from Theorem \eqref{thm5.1.1.1}. If we take $k=-0.3$, we have $a=-1.75$, $b= -0.4375$ and $a_1=-2.1875$. For these values of $\alpha$ and $b$ we get $d_7=1.00724$. Since, $d<d_7$ and $a_1<2b$, hence we get $x_2^*=0.823171$ stable $\forall\tau\geq0$. For $\tau=0.5$, the stable solution near $x_2^*$ is given in Figure \eqref{figure5.8}$(b)$.  
	\begin{figure}
		\subfloat[$x_{2}^*=0.823171$ is stable for $p=1.6$, $q=4$, $r=10$, $k=-1.9$ and $\alpha=0.9$ for $\tau=0.1$]{%
			\includegraphics[scale=0.8]{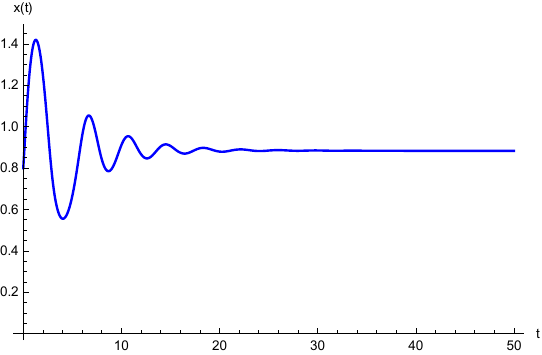}
		}\hspace{0.5cm}
		\subfloat[$x_{2}^*=0.823171$ is stable for $p=0.4$, $q=0.8$, $r=10$, $k=-0.3$ and $\alpha=0.9$ for $\tau=0.5$]{%
			\includegraphics[scale=0.8]{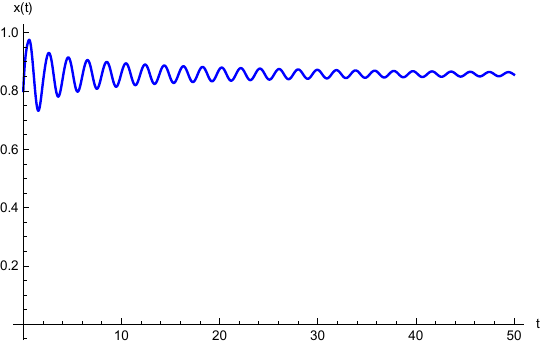}
		}
		\caption{Chaos in Section \eqref{sec5.5} is controlled by taking control parameter in the permissible range}
		\label{figure5.8}
	\end{figure}
	\section{Validation of the results}\label{sec5.7}  		
	\begin{Ex}\label{ex5.1}
		\begin{itemize}
			\item[$\bullet$] We verify some of the results given in Theorem \eqref{thm5.1}. If we take $r=2$, $p=1.4666$ and $q=2.01655$, $\alpha=0.3$, then we have $a=7.333$, $b=-3.33331$, $a_1=3.99969$, $c_0=-0.0369973$ and $d_0=0.00739946$. When we take $d=0.008$ then along the curve $\Gamma_{13}$, $a_{11}=5.9332$.  Since, $a_1<a_{11}$ so we get unstable solution near the equilibrium point $x_2^*=1$ for all $\tau\geq0$. For $\tau=0.2$ the unstable solution is given in the Figure \eqref{figure5.3.1}$(a)$.
			\item[$\bullet$] When $p=2$, $q=3$, $r=2$ and $\alpha=0.3$ the corresponding values of $a=10$, $b=-3.3333$, $a_1=6.66667$, $c_0=-0.0369971$ and $d_0=0.00739942$. When $d=0.008$, then along the curve $\Gamma_{13}$ and $\Gamma_2$ we get $a_{11}=5.9332$, $a_{12}=7.8726$, respectively. So, $a_{11}<a_1<a_{12}$ hence, we are in IS region. From \cite {bhalekar2024stability} the critical values of $\tau$ are given by two sets viz. $S1=\{0.000126443$, $0.00240816$, $0.00468988, \ldots$\} and $S2=\{0.000291275$, $0.000643295$, $0.000995316$, $\ldots\}$. The characteristic roots will shift from right half plane to the left half plane at the critical values of $\tau$ in $S1$ whereas they will move from left half plane to the right half plane at critical values of $\tau$ in $S2$. So, the equilibrium point $x_{2}*=0.707107$ is unstable for $0\leq\tau<0.000126443$, as one of the characteristic root with positive real part is $268.428 + 2997.01 I$, stable solution for $0.000126443<\tau<0.00029127$ (cf. Figure \eqref{figure5.3.1}($b$)) and again unstable solution for $\tau>0.00029127$ we get unstable solution (cf. Figure \eqref{figure5.3.1}$(c)$ for $\tau=0.0006$).
			\item[$\bullet$] If $p=2.24266$, $q=2.92869$, $r=3$ and $\alpha=0.3$ we get $a_1=7.88$, $b=-3.33333$ and $d_0=0.00739942$. If we take $d=0.008$ then $a_{12}=7.8726$ and $a_{13}=7.88691$. Hence, $a_{12}<a_1<a_{13}$ so we are in the SSR region near the equilibrium point $x_2^*=0.736006$. The critical value of delay is $\tau_*=0.000440955$. The stable equilibrium point $x_2^*$ for $\tau=0.0003$ is given in Figure \eqref{figure5.3.1}$(d)$ and one of the characteristic root with positive real part is $62.7761 + 11679 I$ for $\tau=0.0005>\tau_*$.
			\item[$\bullet$] If we choose  $p$, $q$, $r$ and $\alpha$ as $2.2666$, $4.5332$, $2$ and $0.3$ respectively, then we get $a_1=8$, $b=-3.33333$ and $d_0=0.00739942$. When we take $d=0.008$ then $a_{13}=7.88691$ and $a_{14}=8.08362$. So, $a_{13}<a_1<a_{14}$, hence we are in SS region. The critical values of $\tau$ are $S1=\{0.000493599$, $0.00102452$, $0.00155544\ldots\}$ and $S2=\{0.000798061$, $0.00160821$, $0.00241835\ldots\}$ from \cite{bhalekar2024stability}. So, we get stable solution for $\tau\in[0,0.000493599)$ (cf. Figure \eqref{figure5.3.1}$(e)$ for $\tau=0.0003$), $x_2^*=1$ will be unstable for $\tau\in(0.000493599,0.000798061)$ and one of the characteristic roots with positive real part is $35.1569 - 9988.9 I$ for $\tau=0.0006$. Again, we get stable solution for $\tau\in(0.000798061,0.00102452)$ (cf. Figure \eqref{figure5.3.1}$(f)$ for $\tau=0.0009$).
			\item[$\bullet$] If we take $p=2.46666$, $q=4.93332$, $r=2$ and $\alpha=0.3$, then we get $a_1=9$, $b=-3.3333$. Hence, $a_1>a_{14}$, so for any $\tau\geq0$, $x_2^*$ is stable. The stable solution near $x_2^*=1$ for $\tau=0.06$ is given in Figure\eqref{figure5.3.1}$(g)$.
		\end{itemize}
		\begin{figure}
			\subfloat[$x_{2}^*$ is unstable for $p=1.4666$, $q=2.01655$, $r=2$, $\alpha=0.3$, $d=0.008$ and $\tau=0.2$]{%
				\includegraphics[scale=0.4]{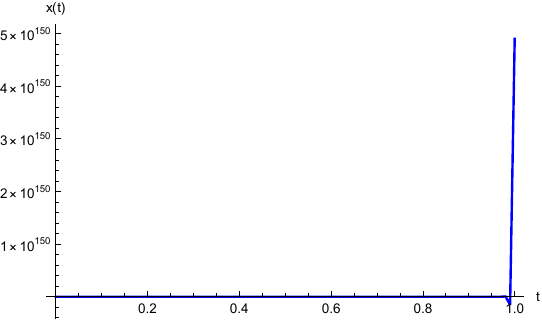}
			}\hspace{0.5 cm}
			\subfloat[$x_{2}^*$ is stable for $p=2$, $q=3$, $r=2$, $\alpha=0.3$, $d=0.008$ and $\tau=0.0002$]{%
				\includegraphics[scale=0.4]{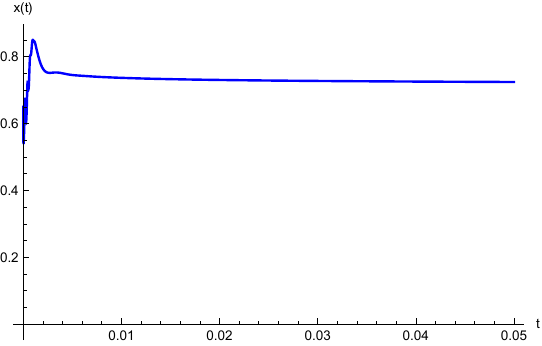}
			}\hspace{0.5 cm}
			\subfloat[$x_{2}^*$ is unstable for $p=2$, $q=3$, $r=2$, $\alpha=0.3$, $d=0.008$ and $\tau=0.0006$]{%
				\includegraphics[scale=0.4]{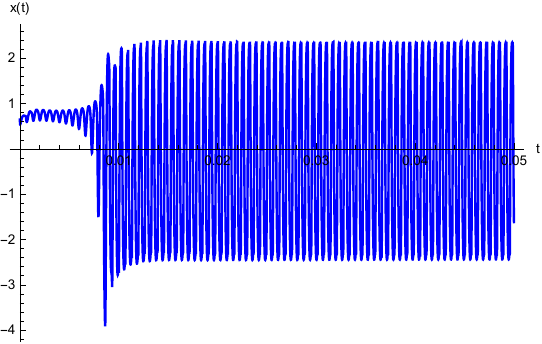}
			}\hspace{0.5 cm}
			\subfloat[$x_{2}^*$ is stable for $p=2.24266$, $q=2.92869$, $r=3$, $\alpha=0.3$, $d=0.008$ and $\tau=0.0003$]{%
				\includegraphics[scale=0.4]{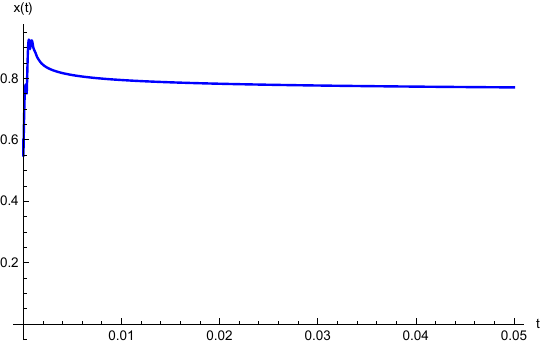}
			}	\hspace{0.5cm}
			\subfloat[$x_{2}^*$ is stable for $p=2.2666$, $q=4.5332$, $r=2$, $\alpha=0.3$, $d=0.008$ and $\tau=0.0003$]{%
				\includegraphics[scale=0.4]{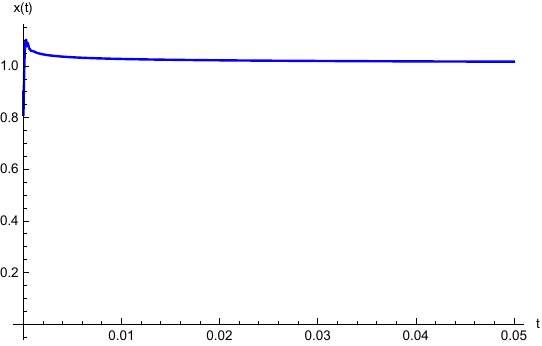}
			}
			\hspace{0.5cm}
			\subfloat[$x_{2}^*$ is stable for $p=2.2666$, $q=4.5332$, $r=2$, $\alpha=0.3$, $d=0.008$ and $\tau=0.0009$]{%
				\includegraphics[scale=0.4]{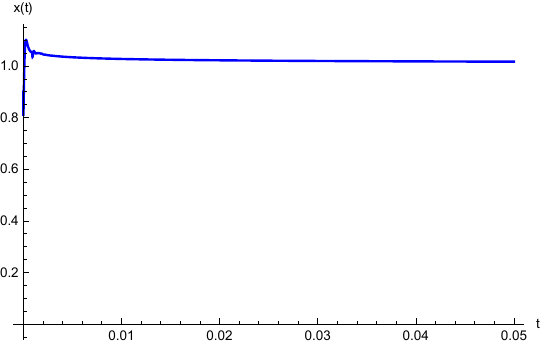}
			}
			\hspace{0.5cm}
			\subfloat[$x_{2}^*$ is stable for $p=2.4666$, $q=4.93332$, $r=2$, $\alpha=0.3$, $d=0.008$ and $\tau=0.06$]{%
				\includegraphics[scale=0.4]{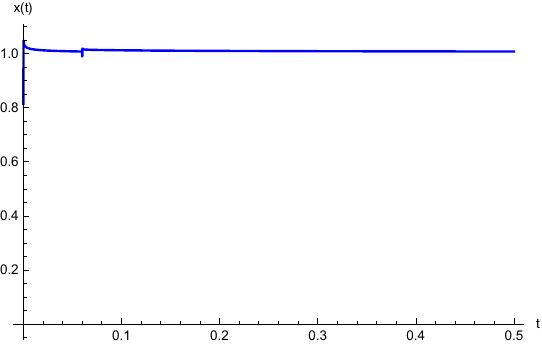}
			}	\caption{Figures for the Example \eqref{ex5.1} when $r<\frac{q}{q-p}$ and $0<\alpha<1/2$ }
			\label{figure5.3.1}
		\end{figure}
		\begin{itemize}
			\item Now, let us take $\alpha=0.8$, $p=2$, $q=9$ and $r=0.1$. So, for these values of $p$, $q$ and $r$, we get $a=-6.66667$, $b=6.14815$ and $a_1=-0.518519$. For these $b$ and $\alpha$, $c_1=0.448225$, $d_1=c_1(\alpha-1/2)=0.134468$, $c_2=0.410037$ and $d_2=0.123011$. If we take $d>d_1$, then we get two bifurcation curve $\Gamma_7$ and $\Gamma_{14}$. If $d=1$ then at the curve $\Gamma_7$ we have $a_{21}=-5.49463$ whereas at $\Gamma_{14}$, $a_{22}=-1.37386$. Since, $a_{22}<a_1<0$ so by the Theorem \eqref{thm5.1} we are in the SSR region. So, the critical value of delay as $\tau_*=2.82228$. The stable solution near $x_2^*=275855$ for $\tau=0.2$ is given in Figure \eqref{figure5.3}$(a)$ and unstable for $\tau=0.4$ is shown in Figure \eqref{figure5.3}$(b)$. 
			\item[$\bullet$] If we take $p=1$, $q=11$, $r=1$ and $\alpha=0.8$ then we get $a$, $b$ and $a_1$ as $-3.33333$, $0.30303$ and $-3.0303$	 respectively. Corresponding to these $b$ and $\alpha$ we get $c_1=9.09399$, $c_2=6.71366$, $d_1=2.7282$ and $d_2=2.0141$. If we take $d=0.9<d_2$ then we get stable solution near $x_2^*=10$ (cf. Figure \eqref{figure5.3}$(c)$ for $\tau=2$).
			\item[$\bullet$] If we take $p=5$, $q=10$, $r=0.5$ and $\alpha=0.8$ then we get $a=-16.6667$, $b=12.5$, $a_1=-4.16667$, $c_1=0.22046$ and $d_1=0.066138$. If we take $d=0.9>d_1$ then we have $a_{21}=-11.1661$ and $a_{22}=-3.41031$. Hence, $a_{21}<a_1<a_{22}$ so we are in the SS region. One set of critical values of $\tau$ are $S1=\{1.76109$, $3.94814$, $6.13519\ldots$\} and $S2=\{3.21034$, $6.70077$, $10.1912\ldots$\}. $x_2^*=1$ is stable for $\tau\in[0,1.76109)$, unstable for $\tau\in(1.76109,3.21034)$, again stable for $\tau\in(3.21034,3.94814)$ and so on. The stable solution for $\tau=1.6$ and $\tau=3.5$ are given in Figures \eqref{figure5.3}$(d)$ and \eqref{figure5.3}$(f)$. The unstable solution for $\tau=3$ is given in Figure \eqref{figure5.3}$(e)$. 
		\end{itemize}
	\end{Ex}  		
	\begin{figure}
		\subfloat[$x_{2}^*$ is stable for $p=2$, $q=9$, $r=0.1$, $\alpha=0.8$, $d=1$ and $\tau=0.2$]{%
			\includegraphics[scale=0.35]{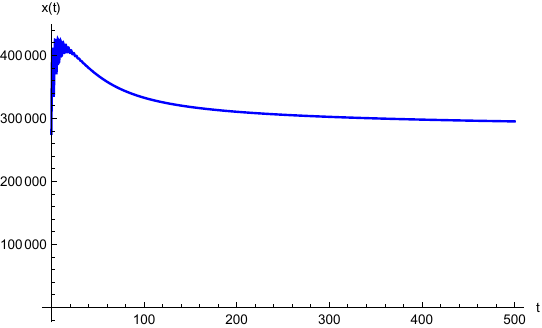}
		}	\hspace{0.5cm}
		\subfloat[$x_{2}^*$ is unstable for $p=2$, $q=9$, $r=0.1$, $\alpha=0.8$, $d=1$ and $\tau=0.4$]{%
			\includegraphics[scale=0.35]{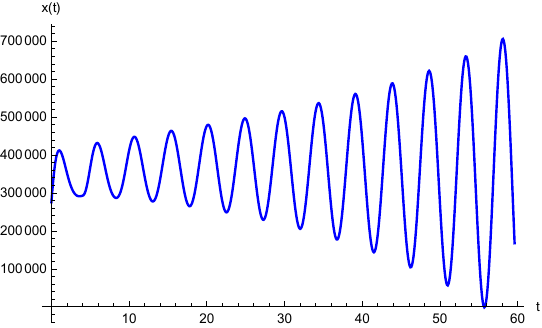}
		}	\hspace{0.5cm}
		\subfloat[$x_{2}^*$ is stable for $p=1$, $q=11$, $r=1$, $\alpha=0.8$, $d=0.9$ and $\tau=2$]{%
			\includegraphics[scale=0.35]{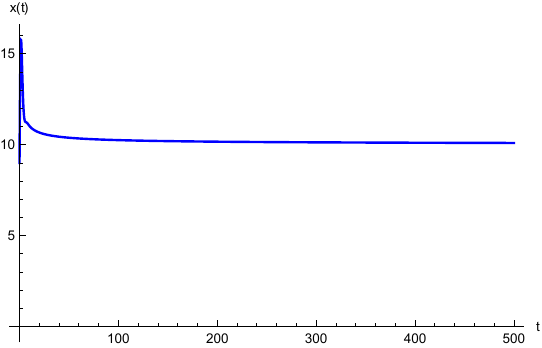}
		}	\hspace{0.5cm}
		\subfloat[$x_{2}^*$ is stable for $p=5$, $q=10$, $r=0.5$, $\alpha=0.8$, $d=0.9$ and $\tau=1.6$]{%
			\includegraphics[scale=0.35]{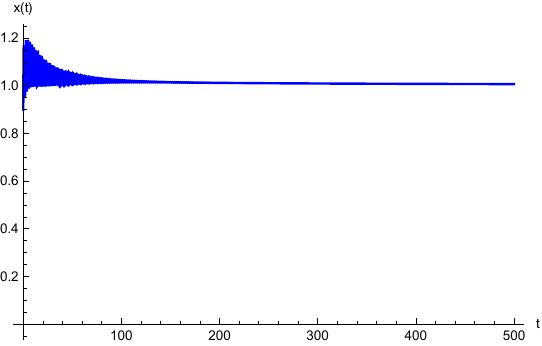}
		}
		\subfloat[$x_{2}^*$ is unstable for $p=5$, $q=10$, $r=0.5$, $\alpha=0.8$, $d=0.9$ and $\tau=3$]{%
			\includegraphics[scale=0.35]{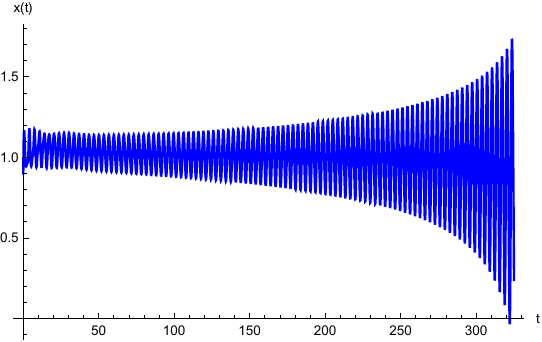}
		}	\hspace{0.5cm}
		\subfloat[$x_{2}^*$ is unstable for $p=5$, $q=10$, $r=0.5$, $\alpha=0.8$, $d=0.9$ and $\tau=3.5$]{%
			\includegraphics[scale=0.35]{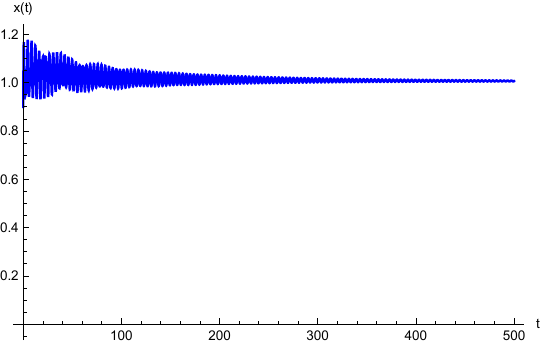}
		}
		\caption{Figures for the Example \eqref{ex5.1} when $r<\frac{q}{q-p}$ and $1/2<\alpha<1$ }
		\label{figure5.3}
	\end{figure}
	\section{Conclusion}\label{sec5.8}
	We proposed two generalizations to the Mackey-Glass equation defined by equation \eqref{eq5.1..1.1.1} and equation \eqref{eq5.1}.
	
	These equations reduce to the classical case for $\alpha=1$ and $\alpha=1/2$, $d=1$, respectively. The equation \eqref{eq5.1..1.1.1} shows the qualitative behaviour similar to the classical Mackey-Glass equation.\\
	On the other hand, as the equation \eqref{eq5.1} contains two fractional order derivatives, we observed various stability behaviors such as stability switch, single stable region, delay independent stable and unstable regions in the parameter plane. \\
	All these models possess identical equilibrium points viz. $x_1^*=0$ and  $x_2^*=(\frac{q-p}{p})^{1/r}$. The equilibrium point $x_1^*$ is always unstable whereas a richer dynamics is observed for the set of equilibrium points $x_2^*$.\\
	For the classical case and for the equation \eqref{eq5.1..1.1.1}, $x_2^*$ is asymptotically stable for smaller values of delay and becomes unstable after the critical value $\tau_*$. \\
	For the equation \eqref{eq5.1}, $x_2^*$ exhibits various bifurcations depending on the parameters $d$, $p$, $q$, $r$, $\alpha$ and $\tau$.\\
	The equation \eqref{eq5.1..1.1.1} shows one-scroll chaotic attractor only, whereas the equation \eqref{eq5.1} shows double-scroll attractor as well. \\
	The chaos in both these equations can be controlled by using a simple linear feedback control.  
	\section*{Acknowledgments}
	S. Bhalekar acknowledges the University of Hyderabad for Institute of Eminence-Professional Development Fund (IoE-PDF) by MHRD (F11/9/2019-U3(A)).
	D. Gupta thanks University Grants Commission for financial support (No. F. 82-44/2020(SA-III)).
\bibliographystyle{elsarticle-num}
\nocite{*}
\bibliography{paperref}
	\end{document}